\theoremstyle{plain}
\newtheorem{theorem}{Theorem}[section]
\newtheorem{lemma}[theorem]{Lemma}
\newtheorem{proposition}[theorem]{Proposition}
\newtheorem{corollary}[theorem]{Corollary}
\theoremstyle{definition}
\newtheorem{definition}[theorem]{Definition}
\newtheorem{remark}[theorem]{Remark}
\newtheorem{example}[theorem]{Example}
\newcommand{\ep}{\varepsilon}
\newcommand{\vf}{\varphi}
\newcommand{\cv}{\mathrm{cv}}
\newcommand{\length}{\mathrm{length}}
\newcommand{\dist}{\mathrm{dist}}
\newcommand{\R}{\mathbb{R}}
\newcommand{\N}{\mathbb{N}}
\newcommand{\Ha}{\mathcal{H}}
\renewcommand{\epsilon}{\varepsilon}
\renewcommand{\phi}{\varphi}
\newcommand{\cal}{\mathcal}
\newcommand{\reach}{{\rm reach}\,}
\newcommand{\Crit}{{\rm Crit}}
\def \cal{\mathcal}
\begin{document}

\title{Critical values and level sets of distance functions in Riemannian, Alexandrov and Minkowski spaces}

\author{Jan Rataj}

\author{Lud\v ek Zaj\'\i\v cek}
\address{Charles University\\
Faculty of Mathematics and Physics\\
Sokolovsk\'a 83\\
186 75 Praha 8\\
Czech Republic}

\email{rataj@karlin.mff.cuni.cz}
\email{zajicek@karlin.mff.cuni.cz}

 \subjclass{26B25, 52A21, 53C23}

 \keywords{distance function, critical point, distance sphere, finite dimensional Banach space, Minkowski space, Riemannian manifold, Alexandrov space, DC manifold, positive reach}

 \thanks{The research was supported by the grant MSM 0021620839 from
 the Czech Ministry of Education.
The second author was also supported by the grant
GA\v CR 201/09/0067.  
}

\begin{abstract}

Let $F \subset \R^n$ be a closed set and $n=2$ or $n=3$. S. Ferry (1975) proved that then, for almost all $r>0$,
 the level set  (distance sphere, $r$-boundary) $S_r(F):= \{ x \in \R^n:\ \dist(x,F) = r\}$ is a topological $(n-1)$-dimensional
  manifold. This result was improved by J.H.G. Fu (1985). We show that Ferry's result is an easy consequence
   of the only fact that the distance function  $d(x)=  \dist(x,F)$ is locally DC and has no stationary point
   in $\R^n \setminus F$. Using this observation, we show that Ferry's  (and even Fu's) result extends to sufficiently smooth normed linear spaces $X$ with $\dim X \in \{2,3\}$  (e.g., to $\ell^p_n,\ n=2,3,\ p\geq 2$), which improves and generalizes
    a result  of R. Gariepy and W.D. Pepe (1972). By the same method we also generalize Fu's result to Riemannian manifolds and improve a result of K. Shiohama and M. Tanaka (1996) on distance spheres in Alexandrov spaces.
    \end{abstract}

\maketitle


\markboth{J. Rataj and L.~Zaj\'{\i}\v{c}ek}{Distance spheres}

\section{Introduction}

    Let  $X$ be a metric space and $F \subset X$  a closed set. We will study level sets of the distance function $S_r(F):= 
    \{ x \in \R^n:\ \dist(x,F) = r\}$, $r>0$. We will call these sets  (following \cite{ST}) {\it distance spheres}; they are sometimes called also  
      $r$-boundaries of $F$ (see \cite{Fe}). There is a number of articles that investigate properties of distance spheres. R. Gariepy and W.D. Pepe  \cite{GP} studied distance spheres in a Minkowski space (= finite dimensional Banach space) $X$. S. Ferry \cite{Fe} proved that  if $X= \R^2$ or $X=\R^3$, then, for almost all $r>0$,
 the distance sphere ($r$-boundary) $S_r(F)$ is a topological $(n-1)$-dimensional
  manifold. This result was improved by J.H.G. Fu \cite{Fu}, who proved that these topological manifolds are very nice: they are semiconcave surfaces. Moreover, he proved that, for $n=2$, the above property of $S_r(F)$ is valid for all
   $r>0$ except a relatively closed set $N\subset (0,\infty)$ with $\Ha^{1/2}(N) = 0$. 
   
   We observe that Ferry's result is an easy consequence (see Theorem \ref{abstd2})
   of the only fact that the distance function  $d(x)=  \dist(x,F)$ is locally DC (i.e.,  a difference of two convex functions)  and has no stationary point
   in $\R^n \setminus F$. Using this observation, we show that Ferry's  (and even Fu's) result extends to sufficiently smooth normed linear spaces $X$ with $\dim X \in \{2,3\}$  (e.g., to $\ell^p_n,\ n=2,3,\ p\geq 2$), which improves and generalizes
    a result  of R. Gariepy and W.D. Pepe \cite{GP}.
    
    If $X$ is a Riemannian manifold, then it is well-known (see \cite[p.~34]{Fu00} or \cite{MM}) that $d(x)=  \dist(x,F)$ is locally semiconcave
     (and therefore also locally DC) in arbitratry local coordinates. So, we can apply Theorem \ref{abstd2} and obtain Ferry's (and even Fu's) results.

     K. Shiohama and M. Tanaka  \cite{ST} studied distance spheres of compact subsets $F$ of a connected two-dimensional complete Alexandrov space without boundary $X$. They proved that then, for almost all $r>0$, the distance sphere $S_r(F)$ is rectifiable and consists of a disjoint union of finitely many simply closed curves. Using our method and Perelman's DC structure on Alexandrov spaces, we obtain a result, which improves that of \cite{ST}. Namely, we show that $S_r(F)$ is a one-dimensional Lipschitz  manifold  for all
   $r>0$ except a closed set $N\subset [0,\infty)$ with $\Ha^{1/2}(N) = 0$. 
   
   If $X$ is a three-dimensional complete Alexandrov space (possibly with boundary points), then our method gives only
    that, for almost every $r>0$, the set $S_r(F) \cap X^*$ is a two-dimensional Lipschitz manifold, where $X^*$ is the set of all ``Perelman regular'' points (note that $X^*$ is an open dense convex subset of $X$, cf. Section 6). Consequently, if
     $\Ha^{1}(X \setminus X^*) = 0$,   then Ferry's result extends to $X$.  
    
    In all  types of spaces considered above, we obtain also weaker results on distance spheres in $n$-dimensional
     spaces with arbitrary $n$. Namely, we prove that, except a countable set of radii $r$, there exists an
      $(n-1)$-dimensional Lipschitz manifold $A_r \subset S_r(F)$ such that $A_r$ is open and dense in $S_r(F)$ and 
       $\Ha^{n-1}(S_r(F) \setminus A_r)=0$.
(For the density of $A_r$ in $S_r(F)$  in Alexandrov spaces we need that $X=X^*$.)

\section{Preliminaries}
The symbol $B(x,r)$ will denote the open ball
  with center $x$ and radius $r$.
  
\begin{definition}\label{reach}
Let $X$ be a metric space.  
Given a nonempty subset $A\subset X$ and $p\in A$, the {\it reach of $A$ at $p$}, $\reach (A,p)$, is defined as the supremum of all $\ep>0$ such that any point $q\in X$ with $\dist (p,q)<\ep$ has its unique nearest point in $A$. We set $\reach A:=\inf_{p\in A}\reach (A,p)$ and say that {\it $A$ has positive reach} if $\reach A>0$. The set $A$ is said to have {\it locally positive reach} if $\reach (A,p)>0$ for all $p\in A$. 
\end{definition}

\begin{remark}
Sets with positive reach were introduced by Federer \cite{Fed59} in the Euclidean space and by Kleinjohann \cite{Kj80} in Riemannian manifolds. Note that if $A$ is compact then $\reach A>0$ whenever $A$ has locally positive reach. It follows from the obvious fact that $\reach (A,p)$ 
 depends continuously on $p\in A$.
\end{remark}

\begin{definition}\label{lipman}(cf.\ \cite{FuCur}, p.~622)\  
We say that a metric space $X$ is an {\it $m$-dimensional Lipschitz manifold} if for every $a\in X$ there exists an open neighbourhood $U$ of $a$
 and a bilipschitz homeomorphism  $\vf$ of $U$ onto an open subset of $\R^m$.
 \end{definition}
  
Let $X$ be a normed linear space and let $f$ be a real function defined on an open set $G \subset X$.

   The {\it directional derivative} and the {\it one-sided  directional derivative} 
    of $f$ at $a\in G$ in the direction $v \in X$ are defined by 
   $$ f'(a,v) := \lim_{t \to 0} \frac{f(a+tv)-f(a)}{t} \ \ \text{and}\ \ \ f'_+(a,v) := \lim_{t \to 0+} \frac{f(a+tv)-f(a)}{t}.$$
   
  Now suppose that $f$ is locally Lipschitz on $G$. Then 
    $$f^0(a,v) : = \limsup_{z \to a, t \to 0+}\ \frac{f(z+tv)-f(z)}{t}$$
   is the {\it Clarke derivative} of $f$ at $a\in G$ in the direction $v\in X$ and
   $$ \partial^Cf(a) := \{x^* \in X^*:\ \langle x^*,v\rangle \leq  f^0(a,v)\ \ \text{for all}\ \ v \in X\}$$
   is the {\it Clarke subdifferential} of $f$ at $a$.
   We shall use the following terminology (see  \cite{Fu}).
   \begin{definition}\label{critreg}
   Let $f$ be a locally Lipschitz function on an open subset $G$ of a normed linear space. Then we say 
    that $a$ is a {\it regular point} of $f$ if $0 \notin \partial^Cf(a)$. If $0 \in \partial^Cf(a)$,
   we say that $a$ is a {\it critical point} of $f$. The set of all critical points of $f$ will be denoted by 
    $\Crit(f)$. By the set of {\it critical values} of $f$ we mean the set $\cv(f) := f(\Crit(f))$.
   \end{definition}
    
 We will need the following easy lemma. Because of a lack of a reference we supply the obvious proof.   
\begin{lemma}\label{charcrit}
Let $f$ be a locally Lipschitz function on an open set $G \subset \R^n$ and $a \in G$. Then the following conditions are equivalent.
\begin{enumerate}
\item $a \notin \Crit(f).$
\item
 There exist $\delta >0$, $\varepsilon >0$, and $v \in \R^n$ such that
$$  \frac{f(x+tv) - d(x)}{t} < - \varepsilon, \ \ \ \text{whenever}\ \ \ t>0, x \in U_{\delta}(a), x+tv \in U_{\delta}(a).$$
\end{enumerate}
\end{lemma}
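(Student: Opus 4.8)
The plan is to funnel both implications through the single statement that \emph{there is a direction $v\in\R^n$ with $f^0(a,v)<0$}. That this is equivalent to (i) is immediate from the definition of the Clarke subdifferential together with the finiteness of $f^0$ (which holds because $f$ is locally Lipschitz): one has $0\in\partial^Cf(a)$ precisely when $0=\langle 0,v\rangle\le f^0(a,v)$ for every $v\in\R^n$, i.e.\ when $f^0(a,\cdot)\ge 0$ everywhere; negating, $a\notin\Crit(f)$ holds exactly when $f^0(a,v)<0$ for some $v$. So it remains to identify the existence of such a $v$ with condition (ii) (in which $d$ is of course to be read as $f$). I would first record that in both settings the relevant $v$ is nonzero, since $f^0(a,0)=0$ and the strict inequality in (ii) is impossible for $v=0$.

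For the direction (i)$\Rightarrow$(ii) I would argue as follows: given $v$ with $f^0(a,v)<0$, fix $\varepsilon>0$ with $f^0(a,v)<-\varepsilon$. By the definition of the upper limit in $f^0(a,v)=\limsup_{z\to a,\,t\to 0+}\frac{f(z+tv)-f(z)}{t}$, there is $\eta>0$ such that $\frac{f(z+tv)-f(z)}{t}<-\varepsilon$ whenever $|z-a|<\eta$ and $0<t<\eta$. Then choose $\delta>0$ with $\delta<\eta$ and $2\delta/|v|<\eta$. If $t>0$ and $x,x+tv\in U_\delta(a)$, then $|x-a|<\delta<\eta$ while $t|v|=|(x+tv)-x|<2\delta$ forces $0<t<2\delta/|v|<\eta$; hence $\frac{f(x+tv)-f(x)}{t}<-\varepsilon$, which is (ii). For (ii)$\Rightarrow$(i): given $\delta,\varepsilon,v$ as in (ii), I would observe that for every $z$ with $|z-a|<\delta/2$ and every $t>0$ with $t|v|<\delta/2$ both $z$ and $z+tv$ lie in $U_\delta(a)$, so (ii) yields $\frac{f(z+tv)-f(z)}{t}<-\varepsilon$; letting $z\to a$ and $t\to 0+$ gives $f^0(a,v)\le-\varepsilon<0$, whence (i).

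I do not expect any genuine obstacle here: everything rests on the two elementary observations that membership of $0$ in $\partial^Cf(a)$ unwinds directly to nonnegativity of $f^0(a,\cdot)$, and that the seemingly global condition (ii) is in fact infinitesimal, because requiring $x$ and $x+tv$ to lie in a common $\delta$-ball already pins $t$ below $2\delta/|v|$. The only point needing a little care is the bookkeeping of the two scales in the direction (i)$\Rightarrow$(ii) — the radius $\delta$ versus the threshold $\eta$ coming out of the $\limsup$, the latter entering only after division by $|v|$.
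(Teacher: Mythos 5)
Your proof is correct and takes essentially the same route as the paper: the paper also unwinds $0\notin\partial^Cf(a)$ to the existence of $v$ with $f^0(a,v)<0$ and then declares the equivalence of that condition with (ii) to be easy, which is exactly the two-scale bookkeeping you carry out explicitly (including the correct readings that $d$ means $f$ and that the constraint $x,x+tv\in U_\delta(a)$ already forces $t<2\delta/|v|$).
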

\begin{proof}
The condition (i) (i.e., $0 \notin \partial^Cf(a)$) holds if and only if there exists $v \in \R^n$ such that
 $f^0(a,v) = \limsup_{z \to a, t \to 0+}\ \frac{f(z+tv)-f(z)}{t} < 0$. It is easy to see that the last
  condition is equivalent to (ii).
  \end{proof}
   
   Lemma \ref{charcrit} immediately implies the well-known fact that
   \begin{equation}\label{crcl}
   \Crit(f) \ \ \text{is closed}\ \ \text{in}\ \ G.
   \end{equation}

 If $f$ is a real function on a normed linear space $X$, then
      the symbol $f'(a)$ stands for the (Fr\' echet) derivative of $f$ at $a\in X$. If $f'(a)$ exists and
      $$ \lim_{x,y \to a, x\neq y} \ \frac{f(y)-f(x)- f'(a)(y-x)}{\|y-x\|} =0,$$
       then we say that $f$ is {\em strictly differentiable} at $a$ (cf. \cite[p.~19]{Mor}).

   Lemma \ref{charcrit} easily implies the well-known fact (see, e.g., \cite[Proposition 2.2.4]{Cl}, where a weaker notion of strict differentiability is used) that
   \begin{equation}\label{strcrit}
   \text{If}\ \  f'(a) \neq 0\ \ \text{ and}\ \  f\ \  \text{is strictly differentiable at}\ \  a,\ \ \text{ then}\ \  a \notin \Crit(f).
   \end{equation}

\begin{definition}\label{dcfce}
Let $C$ be a nonempty convex set in a real normed linear space $X$. A
function $f\colon C\to\R$ is called {\em DC}\ (or  d.c., or ``delta-convex'')
if it can be represented as a difference of two continuous convex
functions on $C$.

If  $Y$ is a finite-dimensional normed linear space, then a mapping $F\colon C\to Y$ is called {\em DC}, if
 $y^*\circ F$ is a DC\ function on $C$ for each linear functional $y^* \in Y^*$.
 \end{definition}

 \begin{remark}\label{podc}
 \begin{enumerate}
 \item To prove that $F$ is DC, it is clearly sufficient to show that $y^*\circ F$ is DC for each $y^*$ from a basis of $Y^*$.
 \item Each DC\ mapping is clearly locally Lipschitz.
 \end{enumerate}
\end{remark}
We will need the following  properties of DC functions and mappings. 
 
 \begin{lemma}\label{zakldc}
 Let $X,Y,Z$ be finite-dimensional normed linear spaces, let $C\subset X$ be a nonempty convex set, and
   $U \subset X$ and $V \subset Y$ open sets.
 \begin{enumerate}
 \item[(a)] 
 If the derivative of a function $f$ on $C$ is  Lipchitz, then  $f$ is DC. In particular, each affine mapping is DC.
 \item[(b)] 
 Let a mapping $F: U \to Y$ be locally  DC, $F(U) \subset V$, and let   $G:V\to Z$ be  locally DC.
  Then $G \circ F$ is locally DC\ on $U$.
 \item[(c)]
 If mappings  $F: U \to Y$ and $G: U \to Y$ are locally DC, then $F+G$ is also locally DC.
 \item[(d)]
 Let $n \in \{1,2\}$, $\dim X =n$, and let $f$ be a locally DC function on $U$. Let $S:= \{x\in U:\ f'(x)=0\}$ be the set
  of all stationary points of $f$. Then  $\Ha^{n/2}(f(S)) = 0$.
  \end{enumerate}
 \end{lemma}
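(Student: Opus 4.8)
\medskip
\noindent\textit{Proof idea.}
For (a) the plan is the standard fact that a $C^{1,1}$ function is DC: fix an inner product on $X$ and let $L$ be a Lipschitz constant of $f'$ on $C$; then for any $M\ge L/2$ the function $g:=f+M\|\cdot\|^2$ has monotone derivative on the convex set $C$, because
\[
\langle g'(x)-g'(y),\,x-y\rangle=\langle f'(x)-f'(y),\,x-y\rangle+2M\|x-y\|^2\ge(2M-L)\|x-y\|^2\ge0 ,
\]
hence $g$ is convex and $f=g-M\|\cdot\|^2$ is DC; an affine mapping has constant, hence Lipschitz, derivative, so it is DC. For (c) I would simply use $y^*\circ(F+G)=y^*\circ F+y^*\circ G$ together with the fact that a sum of two differences of continuous convex functions is again such a difference. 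For (b), by Remark \ref{podc}(i) it is enough to take $Z=\R$ and, locally, to write the outer function as a difference of two convex Lipschitz functions; the assertion then reduces to the classical composition theorem for DC mappings between finite-dimensional spaces (the composition of a convex Lipschitz function with a locally DC mapping is locally DC), which I would quote rather than reprove.

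The substance is (d). First I would reduce to the case that $U$ is bounded and $f=\varphi-\psi$ with $\varphi,\psi$ convex and Lipschitz on a neighbourhood of $\overline U$; then $\nabla f=\nabla\varphi-\nabla\psi$ is of locally bounded variation and $D^2f$ is a (generally matrix-valued) signed measure $\mu$ with $V:=|\mu|(\overline U)<\infty$. The key point is a \emph{quadratic oscillation estimate}: if a cube $Q\subset U$ meets $S$, then
\[
\diam f(S\cap Q)\le c\,|\mu|(Q)\,\diam Q .
\]
For $n=1$ this is immediate: given $x,y\in S\cap I$ one has $f'(x)=0$, so $|f'(t)|=|f'(t)-f'(x)|\le\var_I(f')$ for every $t\in I$, whence $|f(y)-f(x)|\le|y-x|\,\var_I(f')\le|\mu|(I)\,|I|$. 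For $n=2$ one argues in the same way, bounding the oscillation of the (BV) gradient over $Q$ by a multiple of $|\mu|(Q)/\diam Q$ and using that $\nabla f$ vanishes somewhere on $Q$.

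Granting the key estimate, the case $n=2$ is finished at once because then $n/2=1$: covering $S$ by disjoint dyadic cubes $Q_1,Q_2,\dots$ of side $\le\delta$ gives
\[
\Ha^{1}_{cV\sqrt{2}\,\delta}(f(S))\le\sum_i\diam f(S\cap Q_i)\le c\sqrt{2}\,\delta\sum_i|\mu|(Q_i)\le c\sqrt{2}\,\delta\,V ,
\]
so (letting $\delta\to0$) $\Ha^{1}(f(S))=0$. For $n=1$ the same covering, combined with the Cauchy--Schwarz inequality, yields only the weaker bound $\Ha^{1/2}(f(S))\le\sqrt V<\infty$. To improve this to $\Ha^{1/2}(f(S))=0$ I would split $S=S_0\cup S_1$, where $S_1$ is the set of Lebesgue density points of $S$ and $S_0$ the Lebesgue-null remainder. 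The part $f(S_0)$ is harmless: covering $S_0$ by an open set of measure $<\e$ whose components, suitably subdivided, are intervals $L_k$ of length $<\delta$, one gets $\sum_k(\diam f(S_0\cap L_k))^{1/2}\le(\sum_k|\mu|(L_k))^{1/2}(\sum_k|L_k|)^{1/2}\le V^{1/2}\e^{1/2}$, hence $\Ha^{1/2}(f(S_0))=0$.

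The real obstacle is the ``bulk'' $S_1$, that is, the situation in which $\{f'=0\}$ has positive Lebesgue measure; here the crude covering caps out at $\sqrt V$ and one must use the finiteness of $V=\var(f')$ essentially. Writing $[0,1]\setminus\overline{\{f'=0\}}=\bigsqcup_i(a_i,b_i)$ and subdividing each interval $(a_i,b_i)$ at the (countably many) sign-changes of $f'$ into intervals $J_j$ on which $f$ is monotone, one has $\sum_j|J_j|\le1$ and $\sum_j\var_{J_j}(f')\le V$, while $\diam f(J_j)\le C\,|J_j|\,\var_{J_j}(f')$; consequently the ``gap contributions'' $\big(|J_j|\,\var_{J_j}(f')\big)^{1/2}$ form a convergent series, and the vanishing of its tails should allow one to cover $f\big(\overline{\{f'=0\}}\big)$ at arbitrarily small scales so that the sum of the square roots of the diameters of the covering sets is arbitrarily small, giving $\Ha^{1/2}(f(S_1))=0$ and hence $\Ha^{1/2}(f(S))=0$. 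Turning this last step into a rigorous covering --- in particular coping with the overlaps created by the non-monotonicity of $f$ --- is the delicate point I expect to require the most care; alternatively, one may invoke a Morse--Sard-type theorem for DC functions.
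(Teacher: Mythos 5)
Parts (a)--(c) of your proposal are fine and are essentially the intended ones: the paper does not reprove them but cites \cite{VeZa} (with (a) going back to \cite{A1} and the composition theorem you quote in (b) to Hartman \cite{Ha}), so quoting these results is legitimate.

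The genuine gap is in (d), in the case $n=2$. Your ``key quadratic oscillation estimate'' $\diam f(S\cap Q)\le c\,|\mu|(Q)\,\diam Q$ is false, even with $c$ depending on $f$. Take $g\in C^1(\R)$ whose derivative is of bounded variation, built from disjoint smooth steps: on an interval $I_k$ of length $\ell_k=2^{-k}$ let $g$ increase by $h_k=4^{-k}$ with $g'=0$ at both endpoints and $g$ locally constant between the steps, so that $\var_{I_k}(g')\approx 4h_k/\ell_k=4\cdot 2^{-k}$ is summable and $f(x,y):=g(x)$ is DC on a planar neighbourhood of $[0,1]^2$. For the square $Q_k:=I_k\times[0,\ell_k]$ both vertical edges lie in $S$, hence $\diam f(S\cap Q_k)\ge h_k$, while $|\mu|(Q_k)=\ell_k\,\var_{I_k}(g')\approx 4h_k$ and $\diam Q_k=\sqrt{2}\,\ell_k\to 0$; your inequality would force $1\le 4\sqrt{2}\,c\,\ell_k$ for all $k$. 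The reason the one-dimensional argument does not transport to the plane is that a BV function of two variables (here $\nabla f$) is not uniformly controlled by its variation measure: the bound $\mathrm{osc}_Q(\nabla f)\le c\,|\mu|(Q)/\diam Q$ that you invoke also fails (a DC spike $\max(0,h-(h/\rho)|z-z_0|)$ has $|\mu|(Q)=O(h)$ but gradient of modulus $h/\rho$ on a disc of radius $\rho\ll\diam Q$). Moreover, even if that oscillation bound were true, combining it with the vanishing of $\nabla f$ at one point of $Q$ would only give $\diam f(S\cap Q)\le c\,|\mu|(Q)$, without the extra factor $\diam Q$, and then the dyadic covering sum is bounded by $cV$ rather than by $c\sqrt{2}\,\delta V$ --- exactly the ``caps out at $\sqrt V$'' obstruction you yourself identify for $n=1$. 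So the case $n=2$, which is precisely Landis's theorem \cite{Lan}, is not established by your argument.

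For $n=1$ your treatment of the null part $S_0$ is correct, but the bulk part $S_1$ is, by your own admission, only a sketch with the decisive covering left open; it can be completed along different lines (for $x,y$ in a set where $S$ has uniformly high density one has $|f(y)-f(x)|\le |[x,y]\setminus S|\cdot|\mu|([x,y])$ with $|[x,y]\setminus S|\le\ep|y-x|$, and Cauchy--Schwarz then makes the $\Ha^{1/2}$-premeasures tend to $0$), but as written neither case of (d) is proved. Note that the paper itself does not reprove (d): it relies on the detailed BV-based proof of Pavlica and Zaj\'{\i}\v{c}ek \cite[Corollary~4.5]{PZ} for $n=2$ and on \cite{Pavlica} for $n=1$, so your fallback ``invoke a Morse--Sard-type theorem for DC functions'' is exactly the intended route --- but then it is a citation, not the self-contained argument you sketch.
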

 The proofs  of (a)--(c) can be found in \cite{VeZa}. Let us note that (a) was at first proved in \cite{A1}, and (b) in \cite{Ha}.
 
 The Morse-Sard theorem (d) was for $n=2$ published by Landis \cite{Lan} with a sketch of the proof. A detailed proof
  based on the modern theory of BV  functions can be found in \cite[Corollary~4.5]{PZ}. The easier case $n=1$ is proved in \cite{Pavlica}.

 An important subclass of the class of DC functions is formed by semiconcave functions (cf. \cite{CaSi}).
 
 \begin{definition}\label{semic}
 Let $H$ be a unitary space. A real function $u$ on an open convex set $C \subset H$ is called {\it semiconcave} (with a semiconcavity constant $c$) if the function
 $$ g(x):= u(x) - (c/2) \|x\|^2$$
  is concave on $C$. 
  
  A function $u$ on an open set $G \subset H$ is called {\it locally semiconcave} if, for each $x \in G$, there exists $\delta>0$ such that $u$ is semiconcave on $B(x,\delta)$. A function $g$ on $G$ is called {\it locally semiconvex}
  if $-g$ is locally semiconcave.
 \end{definition}
 
 \begin{remark}\label{cnekarovder}
 \begin{enumerate}
 \item
  $g$ is locally semiconvex on $G$ if and only if, for each $x \in G$, there exists $\delta>0$ and a $C^{\infty}$ smooth
   function $s$ on $B(x,\delta)$ such that the function $g+s$ is convex on $B(x,\delta)$. It follows, e.g., from
    \cite[Proposition 1.1.3]{CaSi} applied to $u:= -g$.
    \item
     If $g$ is locally semiconvex on $G$, $a\in G$ and $v \in H$, then $g^0(a,v) = g'_+(a,v)$. It follows, e.g., from
    \cite[Theorem 3.2.1]{CaSi} applied to $u:= -g$.
  \end{enumerate}

 \end{remark}

\begin{definition}\label{surfldc}
Let $H$ be an $n$-dimensional unitary space  and $1 \leq k <n$.

\begin{enumerate}
\item
 We say that a set $\emptyset \neq M \subset H$ is a {\it $k$-dimensional Lipschitz surface} (resp. a  {\it $k$-dimensional DC surface}) in $H$, if  for each $x \in M$ there exists a $k$-dimensional linear space $Q \subset H$, an open neighbourhood $W$ of  $x$, a set $G \subset Q$ open in $Q$ and a Lipschitz (resp. locally DC) mapping $h: G \to Q^{\perp}$ such that
$$  M \cap W = \{u + h(u): u \in G\}.$$
\item
We say that a set $\emptyset \neq M \subset H$ is an {\it $(n-1)$-dimensional semiconcave surface} in $H$,
 if  for each $x \in M$ there exists an $(n-1)$-dimensional linear space $Q \subset H$, an open neighbourhood $W$ of  $x$, a set $G \subset Q$ open in $Q$, a vector $0 \neq v \in Q^{\perp}$  and a  locally semiconcave function
 $s: G \to \R$ such that
$$  M \cap W = \{u + s(u)v : u \in G\}.$$
\end{enumerate}
By  a $0$-dimensional Lipschitz (resp. DC, resp. semiconcave) surface we mean a singleton.
 \end{definition}

\begin{remark}\label{surf}
Obviously, each  $k$-dimensional DC surface in $H$ is a $k$-di\-men\-sio\-nal Lipschitz surface in $H$, and each 
$(n-1)$-dimensional semiconcave surface in $H$ is an $(n-1)$-dimensional DC surface in $H$.
\end{remark}

Using the preceeding definition, we can formulate some  versions of known implicit function theorems for Lipschitz, DC and semiconcave functions in a concise form.

\begin{proposition}\label{implic}
Let $f$ be a locally Lipschitz function on an open set $G \subset \R^n$, and let $a \in G \setminus \Crit(f)$. Denote
 $M:= \{x \in G:\ f(x) = f(a)\}$. Then there exists $\delta >0$ such that:
 \begin{enumerate}
 \item
  $M \cap B(a,\delta)$ is an $(n-1)$-dimensional Lipschitz surface in $\R^n$.
  \item
  If $f$ is locally DC, then  $M \cap B(a,\delta)$ is an $(n-1)$-dimensional DC surface in $\R^n$.
  \item
  If $f$ is locally semiconcave, then  $M \cap B(a,\delta)$ is an $(n-1)$-dimensional semiconcave surface in $\R^n$.
   Moreover,  $ \reach(\{x \in G:\ f(x) \geq  f(a)\},a)>0$.
\end{enumerate}
\end{proposition}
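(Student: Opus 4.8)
The plan is to reduce all three cases to one and the same local graph representation of $M$ near $a$, and then to check that the graphing function inherits the regularity of $f$. Since $a\notin\Crit(f)$, Lemma~\ref{charcrit} provides $\delta_0>0$, $\ep>0$ and a unit vector $v\in\R^n$ such that
$$f(x+tv)-f(x)<-\ep t\qquad\text{whenever } t>0,\ x\in B(a,\delta_0),\ x+tv\in B(a,\delta_0);$$
let $L$ be a Lipschitz constant of $f$ on $B(a,\delta_0)$. Put $Q:=v^{\perp}$ and write each $x\in\R^n$ uniquely as $x=u+sv$ with $u\in Q$, $s\in\R$; write $a=u_a+s_av$. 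For $u\in Q$ close to $u_a$ the function $s\mapsto f(u+sv)$ is continuous and strictly decreasing at rate at least $\ep$ on a fixed interval about $s_a$ and equals $f(a)$ at $(u_a,s_a)$, so the intermediate value theorem yields a unique $h(u)$ near $s_a$ with $f(u+h(u)v)=f(a)$. Choosing the radii in terms of $\ep$ and $L$, I get $\delta>0$, an open set $G'\subset Q$ and a function $h\colon G'\to\R$ with $M\cap B(a,\delta)=\{u+h(u)v:\ u\in G'\}$ and, on the relevant ranges of $s$, with $f(u+sv)\ge f(a)\iff s\le h(u)$. This already gives (i): if $h(u_1)\le h(u_2)$, then $0=f(u_2+h(u_2)v)-f(u_1+h(u_1)v)\le-\ep\bigl(h(u_2)-h(u_1)\bigr)+L\|u_2-u_1\|$, so $h$ is $(L/\ep)$-Lipschitz and $M\cap B(a,\delta)$ is an $(n-1)$-dimensional Lipschitz surface.

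\emph{(ii).} If $f$ is locally DC near $a$, the map $\Phi(u,s):=(u,f(u+sv))$ is locally DC (Lemma~\ref{zakldc}(a),(b) and Remark~\ref{podc}) and, by the monotonicity together with the Lipschitz property of $f$, it is a bi-Lipschitz homeomorphism of a neighbourhood of $(u_a,s_a)$ onto a neighbourhood of $(u_a,f(a))$ (injectivity and Lipschitzness of $\Phi^{-1}$ follow from the rate estimate exactly as above, openness of the image from invariance of domain). By the inverse/implicit function theorem for DC mappings (see \cite{VeZa}), $\Phi^{-1}$ is locally DC; since $h(u)$ is the last coordinate of $\Phi^{-1}(u,f(a))$ and $u\mapsto(u,f(a))$ is affine, Lemma~\ref{zakldc}(b) gives that $h$ is locally DC, so $M\cap B(a,\delta)$ is an $(n-1)$-dimensional DC surface.

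\emph{(iii).} Suppose $f$ is semiconcave with constant $C$ on $B(a,\delta_0)$. Fix $u_0\in G'$, put $x_0:=u_0+h(u_0)v$, and pick $p$ in the superdifferential of $f$ at $x_0$; for semiconcave $f$ this set is nonempty and contained in $\partial^Cf(x_0)$ (cf.\ \cite{CaSi}), so $\langle p,v\rangle\le f^0(x_0,v)\le-\ep$ and $f(x)\le f(x_0)+\frac C2\|x-x_0\|^2+\langle p,x-x_0\rangle$ for $x\in B(a,\delta_0)$. Substituting $x=u+h(u)v$, using $f(x_0)=f(a)=f(x)$ and the orthogonal splitting $x-x_0=(u-u_0)+(h(u)-h(u_0))v$, one obtains a quadratic inequality $\frac C2t^2+\langle p,v\rangle t+\alpha(u)\ge0$ in $t:=h(u)-h(u_0)$, where $\alpha(u)=\langle p,u-u_0\rangle+\frac C2\|u-u_0\|^2$ satisfies $|\alpha(u)|\le K\|u-u_0\|$ with $K\le L+\frac C2\diam G'$ (recall $\|p\|\le L$). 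Since $|t|\le(L/\ep)\|u-u_0\|$ is small, $t$ cannot exceed the smaller root of this quadratic, and estimating that root (using $|\langle p,v\rangle|\ge\ep$) yields
$$h(u)\le h(u_0)+\langle q,\,u-u_0\rangle+\tfrac{C''}{2}\|u-u_0\|^2,\qquad q:=p/|\langle p,v\rangle|,$$
with $C''$ depending only on $C,\ep,L$. Hence $h$ is locally semiconcave and $M\cap B(a,\delta)$ is an $(n-1)$-dimensional semiconcave surface. For the reach statement, note that $\{x\in G:\ f(x)\ge f(a)\}$ coincides near $a$ with the $v$-subgraph $\{u+sv:\ s\le h(u)\}$ of $h$; writing $\psi(u):=h(u)-\frac{C''}{2}\|u\|^2$ (concave near $u_a$), the $C^{\infty}$ diffeomorphism $u+sv\mapsto u+(s-\frac{C''}{2}\|u\|^2)v$ carries that subgraph onto the subgraph of $\psi$, a convex set of infinite reach; since positive reach at a point is a local property preserved by $C^{1,1}$ diffeomorphisms, $\reach(\{x\in G:\ f(x)\ge f(a)\},a)>0$.

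I expect part (iii) to be the main obstacle: pinning down the quadratic-inequality step so that the semiconcavity constant $C''$ for $h$ comes out uniform in $u_0$, and justifying that positive reach really transfers through the flattening diffeomorphism. (Alternatively, one could cite an off-the-shelf implicit function theorem for semiconcave functions together with a known statement that superlevel sets of semiconcave functions at regular points have positive reach, but the hands-on route above seems cleaner.) Parts (i) and (ii) are then routine once the normal form is in place, the latter reducing to the known DC inverse function theorem.
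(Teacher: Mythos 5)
Your proof is correct, but it takes a more self-contained route than the paper, whose proof of this proposition is essentially citation-based: (i) is read off from \cite[Theorem 3.1]{Fu} (i.e.\ Clarke's implicit function theorem), (ii) from the DC implicit function theorem \cite[Proposition 5.9]{VeZa} applied after exactly the splitting $\R^n=v^{\perp}\oplus\mathrm{span}\{v\}$ that you use, and (iii) from \cite[Theorem 3.3]{Fu} together with the proof of \cite[Corollary 3.4]{Fu} (or Bangert's theorem \cite{Bangert82}) for the reach claim. You instead build the graphing function $h$ by hand from the uniform decrease in Lemma~\ref{charcrit}(ii) (intermediate value theorem plus the $L/\ep$-Lipschitz estimate), which settles (i) with no external input, and in (iii) you in effect reprove the semiconcave implicit function theorem: your quadratic-root step does close, since for $\|u-u_0\|$ uniformly small the discriminant is at least $\ep^2-2CK\|u-u_0\|>0$, $|t|\le(L/\ep)\|u-u_0\|$ stays below the larger root, which is at least $\ep/C$, and the smaller root is at most $\alpha(u)/b+2C\alpha(u)^2/b^3$ with $b:=-\langle p,v\rangle\ge\ep$, giving a constant $C''$ depending only on $C,L,\ep$; the only outside facts you need there are that for semiconcave $f$ the superdifferential is nonempty and contained in $\partial^Cf$ (\cite{CaSi}) and that locally positive reach is preserved under $C^{1,1}$ (in particular smooth) diffeomorphisms (Federer \cite{Fed59}), the latter replacing the paper's appeal to Fu/Bangert. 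The one point you should pin down is (ii): like the paper, you cannot avoid \cite{VeZa}, and your phrase ``inverse/implicit function theorem for DC mappings (see \cite{VeZa})'' is too vague — either apply \cite[Proposition 5.9]{VeZa} directly to $f$ with the splitting you already have (then the auxiliary map $\Phi$ is unnecessary; this is precisely the paper's argument), or, if you keep the inverse-function formulation, verify the hypotheses that result actually requires rather than bare bi-Lipschitzness (here this is easy: every matrix in the generalized Jacobian of $\Phi$ has determinant $\langle\xi,v\rangle\le-\ep$ for some $\xi\in\partial^Cf$, hence is invertible). With that citation made precise, your argument is complete; what it buys is a hands-on, reference-free treatment of (i) and (iii), at the price of noticeably longer computations than the paper's short reduction to known implicit function theorems.
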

\begin{proof}
The statement (i) is an obvious reformulation of \cite[Theorem 3.1]{Fu}, which is an easy consequence of Clarke's
 implicit function theorem. 
 
We will show that the statement (ii) is an easy consequence of \cite[Proposition 5.9]{VeZa}. To this end, choose
 $\delta > 0$, $\varepsilon >0 $ and $v \in \R^n$ as in Lemma \ref{charcrit}(ii). Let $Y$ be the linear span of $\{v\}$ and $X:= Y^{\perp}$. Identifying by the standard way $\R^n$ with $X \times Y$ and, by linear isometries,
   $X$ with $\R^{n-1}$ and $Y$ with $\R$, we can apply  \cite[Proposition 5.9]{VeZa} (with $G:= f$) and obtain 
    the assertion of (ii). Indeed, the fact that $\partial_2f(a)$ contains surjective linear mappings $\R \to \R$ only
     is an easy consequence of the choice of $v$ (see Lemma \ref{charcrit}(ii)). Note  also that Lemma \ref{charcrit}(ii)
      immediately implies the local validity of the inequality $\|G(x,y) - G(x,\overline{y})\| \geq c \|y - \overline{y}\|$ (with $c:= \varepsilon$) which is claimed without a proof in the proof of \cite[Proposition 5.9]{VeZa}.

The first part of the assertion (iii) follows immediately from \cite[Theorem 3.3]{Fu}.
The second part follows easily from the proof of \cite[Corollary 3.4]{Fu} or from \cite[Theorem]{Bangert82} (see (Ban) after Lemma \ref{LRiem}).    
\end{proof}

 \begin{lemma}\label{aznapl}
 Let $X,Y$ be finite-dimensional unitary spaces with $\dim X = n>0$ and $\dim Y = m>0$.
  Let   $G \subset X$ be an  open  set, and  $f: G \to Y$  a locally  DC\ mapping. Then
     there exists a sequence $(T_i)$ of $(n-1)$-dimensional DC surfaces in $X$ such that $f$ is strictly differentiable at each
      point of   $G \setminus \bigcup_{i=1}^{\infty} T_i$.
   \end{lemma}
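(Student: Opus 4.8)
\emph{Proof proposal.} The plan is to strip away the vector-valued and the ``local'' features, reducing everything to a single continuous convex function $u$ on an open convex subset of $\R^n$, and then to combine the classical description of the set where a convex function fails to be differentiable with the (easy) fact that a convex function is strictly differentiable at every point of differentiability.

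\emph{Step 1 (reductions).} Since $Y$ is finite-dimensional, strict differentiability of $f$ at a point is equivalent to strict differentiability at that point of each component $y_j^*\circ f$ with respect to a fixed basis $y_1^*,\dots,y_m^*$ of $Y^*$, and each $y_j^*\circ f$ is a scalar locally DC function on $G$ (Definition \ref{dcfce}). By the Lindel\"of property we may cover $G$ by countably many open balls $B_k\subset G$ on each of which $y_j^*\circ f=\vp_{j,k}-\psi_{j,k}$ with $\vp_{j,k},\psi_{j,k}$ continuous and convex on $B_k$. As strict differentiability at a point is a local notion, and as a countable union of countable families of $(n-1)$-dimensional DC surfaces is again such a family, it suffices to prove: for a continuous convex function $u$ on an open convex set $B\subset\R^n$ there is a sequence of $(n-1)$-dimensional DC surfaces in $\R^n$ off whose union $u$ is strictly differentiable on $B$. (Fixing an orthonormal basis of $X$ identifies $X$ with $\R^n$ and DC surfaces in $X$ with DC surfaces in $\R^n$.)

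\emph{Step 2 (the convex case).} We use two facts. (a) The set $N(u)$ of points of $B$ at which $u$ is not Fr\'echet differentiable is contained in a countable union of $(n-1)$-dimensional DC surfaces in $\R^n$; this is the classical theorem of Zaj\'\i\v cek on differentiability of convex functions (for $n=1$ it merely asserts that $N(u)$ is countable, i.e.\ a countable union of singletons). (b) If $a\in B\setminus N(u)$ then $u$ is strictly differentiable at $a$: with $\zeta:=u'(a)$, for $x,y\in B$ and any $\xi_x\in\partial u(x)$, $\xi_y\in\partial u(y)$ the subgradient inequalities give
$$\langle\xi_x,y-x\rangle\le u(y)-u(x)\le\langle\xi_y,y-x\rangle ,$$
so $|u(y)-u(x)-\langle\zeta,y-x\rangle|\le\big(\sup_{\xi\in\partial u(x)}|\xi-\zeta|+\sup_{\xi\in\partial u(y)}|\xi-\zeta|\big)\|y-x\|$; since for a continuous convex function $\partial u$ is nonempty-compact-valued and upper semicontinuous and $\partial u(a)=\{\zeta\}$, the bracketed quantity tends to $0$ as $x,y\to a$, which is exactly strict differentiability. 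Combining (a) and (b) finishes Step 2, and with Step 1 the lemma.

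\emph{Main obstacle.} Everything except (a) is routine, and (a) is quoted from the literature. If one prefers to argue directly, the route is: $N(u)=\bigcup_v N_v$ over a countable dense set of unit vectors $v$, where $N_v=\{x:\,u'_+(x,v)+u'_+(x,-v)>0\}$; fixing $v=e_n$ and writing $x=(x',t)\in\R^{n-1}\times\R$, the maps $t\mapsto u'_\pm((x',t),e_n)$ are non-decreasing, whence for each rational $q$ the set $\{x:\,u'_-(x,e_n)<q<u'_+(x,e_n)\}$ meets every line $\R e_n+x'$ in at most one point and thus lies on the graph of a function $\varphi_q$ defined on a subset of $\R^{n-1}$. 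The delicate point — which is the whole content of the cited theorem — is to verify that (on the relevant open set) $\varphi_q$ is locally DC; granting this and taking the union over $v$ and $q$ produces the required countable family of DC surfaces.
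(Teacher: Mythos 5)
Your argument is correct and follows essentially the same route as the paper: reduce to scalar components written (locally, via a countable cover) as differences of convex functions, invoke Zaj\'\i\v cek's theorem \cite{Zaj} covering the non-differentiability set of a convex function by countably many $(n-1)$-dimensional DC surfaces, and use that a convex function is strictly differentiable wherever it is Fr\'echet differentiable. The only cosmetic differences are that the paper cites \cite[Proposition 3.8]{VeZa} for the latter fact where you prove it via the subgradient inequality, and that it treats $n=1$ by the countability of the non-differentiability set, which you absorb into the same framework.
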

   \begin{proof}
   We can suppose that $X = \R^n$ and $Y= \R^m$.
   First suppose $n>1$. 
   Using separability of $X$, we can clearly suppose that $G$ is convex and $f$ is DC on $G$.
   Let  $f = (\alpha_1 - \beta_1, \ldots,\alpha_m - \beta_m) $, where all $\alpha_j$ and $\beta_j$ are convex functions.
   By \cite{Zaj}, for each $j$ we can find a sequence $T_k^j, k \in \N,$  of $(n-1)$-dimensional DC\ surfaces in $G$ such that both $\alpha_j$ and $\beta_j$ are differentiable at each point of $ D_j:= G \setminus \bigcup_{k=1}^{\infty} T_k^j$. Since each convex function is strictly differentiable at each point at which it is
    (Fr\' echet) differentiable (see, e.g., \cite[Proposition 3.8]{VeZa} for a proof of this well-known fact), we conclude that each $f_j:= \alpha_j - \beta_j$ is strictly differentiable at each point
     of $D_j$. Since strict differentiablity of $f$ clearly follows from strict differentiability of all $f_j$'s, the proof is finished after  ordering all the sets 
      $T^j_k$, $k \in \N$, $j=1,\dots,m$, to a sequence $(T_i)$.
      
      If $n=1$, we proceed quite similarly, using the well-known fact that a convex function on an open interval is differentiable except a countable set. 
   \end{proof}

\section{Critical values and level sets of DC functions}

\begin{lemma}\label{critdc}
Let $f$, $g$ be convex functions on an open convex  set $C \subset \R^n$, and let $d:= f-g$.
 Assume that the directional derivatives
 $f'(x,v)$, $g'(x,v)$  exist
 for some $x\in C$ and $v\in \R^n$, and  that $f'(x,v) \neq g'(x,v)$. Then  $x \notin \Crit(d)$.
\end{lemma}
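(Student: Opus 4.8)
The plan is to exploit the characterization of non-critical points from Lemma~\ref{charcrit}(ii) together with the basic convexity fact that one-sided directional derivatives of a convex function dominate all difference quotients. Concretely, without loss of generality assume $f'(x,v) > g'(x,v)$ (otherwise replace $v$ by $-v$; note $d'(x,-v)=-d'(x,v)$, and since both $f'(x,\cdot)$ and $g'(x,\cdot)$ exist as two-sided limits, switching sign is legitimate). Set $\varepsilon := \tfrac12\bigl(f'(x,v)-g'(x,v)\bigr) > 0$. The goal is to produce a $\delta>0$ such that
$$ \frac{d(z+tv)-d(z)}{t} \;\ge\; \varepsilon \qquad \text{whenever } t>0,\ z\in B(x,\delta),\ z+tv\in B(x,\delta), $$
which, after replacing $v$ by $-v$ if needed to match the sign convention, is exactly condition (ii) of Lemma~\ref{charcrit} and hence gives $x\notin\Crit(d)$.

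The key step is to bound the two difference quotients separately. Write
$$ \frac{d(z+tv)-d(z)}{t} \;=\; \frac{f(z+tv)-f(z)}{t} \;-\; \frac{g(z+tv)-g(z)}{t}. $$
For the $g$ term I use that $g$ is convex: the map $t\mapsto \tfrac{g(z+tv)-g(z)}{t}$ is nondecreasing in $t>0$ and tends to $g'_+(z,v)$ as $t\to 0+$, so $\tfrac{g(z+tv)-g(z)}{t} \ge g'_+(z,v)$; moreover $g'_+(z,\cdot)$ is upper semicontinuous in $z$ for a finite convex function (a standard fact, following from monotonicity of the difference quotients and Fatou-type reasoning), so $\limsup_{z\to x} g'_+(z,v) \le g'_+(x,v) = g'(x,v)$. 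Dually, for the $f$ term, $t\mapsto \tfrac{f(z+tv)-f(z)}{t}$ is nondecreasing, but I need a \emph{lower} bound holding for \emph{all} admissible $t$; this is where I instead observe that the endpoint $z+tv$ is close to $x$, apply convexity in the form $\tfrac{f(z+tv)-f(z)}{t} \ge \tfrac{f(z+tv)-f(w)}{s}$ won't directly help, so the cleanest route is: by convexity $f(z) \le f(z+tv) - t\, f'_+(z+tv,-v)\cdot(-1)$... more transparently, $\tfrac{f(z+tv)-f(z)}{t} \ge f'_-(z,v)$ fails in general, so I use the symmetric statement that difference quotients of $f$ from the \emph{upper} endpoint are bounded below: $\tfrac{f(z+tv)-f(z)}{t} \ge f'_+(z,v)$ still holds by monotonicity as $t\to 0+$ is the \emph{infimum}, hence $\tfrac{f(z+tv)-f(z)}{t}\ge f'_+(z,v)$, and $f'_+(\cdot,v)$ is again bounded below near $x$ by lower semicontinuity of $f'_+$ only after noting $f'_+(x,v)=f'(x,v)$ and that for convex $f$ one has $\liminf_{z\to x} f'_+(z,v)\ge$ ... — in fact the correct clean fact is $f'_+(z,v)\ge -f'_+(z,-v)$ combined with u.s.c. of $f'_+(\cdot,-v)$.

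The main obstacle, then, is the direction-of-semicontinuity bookkeeping: I need a lower bound on $\tfrac{f(z+tv)-f(z)}{t}$ uniform over small $\delta$, and an upper bound on $\tfrac{g(z+tv)-g(z)}{t}$ uniform over small $\delta$ and all admissible $t$. Both reduce to the single robust lemma that for a finite convex function $h$ on an open convex set, $\tfrac{h(z+tv)-h(z)}{t}$ is a nondecreasing function of $t>0$ with one-sided limits $h'_+(z,v)$ and $h'_+(z,-v)\cdot(-1)$ at the ends, together with the fact that $(z,v)\mapsto h'_+(z,v)$ is upper semicontinuous. Granting that, the $g$-quotient is $\le h'_+(z+tv,v)$ if I run $t$ backwards (it is nondecreasing, so bounded above by its value at the right endpoint — no, bounded above is false without an endpoint constraint)... the honest resolution is to use BOTH endpoints: $g'_+(z,v)\le \tfrac{g(z+tv)-g(z)}{t}\le -g'_+(z+tv,-v)$ for convex $g$, and since $z+tv\in B(x,\delta)$, upper semicontinuity of $g'_+(\cdot,-v)$ gives $-g'_+(z+tv,-v)\le -g'_+(x,-v)+\eta = g'(x,v)+\eta$ for $\delta$ small; symmetrically $\tfrac{f(z+tv)-f(z)}{t}\ge f'_+(z,v)\ge f'(x,v)-\eta$ for $\delta$ small. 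Choosing $\eta=\varepsilon/2$ and subtracting yields the desired bound $\ge f'(x,v)-g'(x,v)-2\eta = \varepsilon$. I would isolate the two-endpoint convexity inequality and the u.s.c. of one-sided derivatives as the two cited/obvious sub-facts, then the rest is the arithmetic above plus an appeal to Lemma~\ref{charcrit}.
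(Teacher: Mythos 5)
Your overall architecture is sound and, in fact, mirrors the paper's proof, but one step is misjustified as written: the upper bound on the $g$-quotient. You claim that ``upper semicontinuity of $g'_+(\cdot,-v)$ gives $-g'_+(z+tv,-v)\le -g'_+(x,-v)+\eta$''. Upper semicontinuity of $\phi:=g'_+(\cdot,-v)$ at $x$ gives $\phi(w)\le\phi(x)+\eta$ for $w$ near $x$, hence $-\phi(w)\ge-\phi(x)-\eta$, which is a \emph{lower} bound on $-g'_+(w,-v)$ — the opposite of what you need. What you actually need is lower semicontinuity of $g'_+(\cdot,-v)$ at $x$, which fails for general convex functions and holds here only because the two-sided derivative $g'(x,v)$ exists; it must be derived, not cited. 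The repair is exactly the trick you already used on the $f$-side, with the roles of $v$ and $-v$ swapped: for convex $g$ one has $-g'_+(w,-v)\le g'_+(w,v)$, and upper semicontinuity of $g'_+(\cdot,v)$ (direction $+v$) at $x$ gives $g'_+(w,v)\le g'_+(x,v)+\eta=g'(x,v)+\eta$ for $w=z+tv$ near $x$. With that substitution your sandwich
$$f'(x,v)-\eta\ \le\ \frac{f(z+tv)-f(z)}{t}\,,\qquad \frac{g(z+tv)-g(z)}{t}\ \le\ g'(x,v)+\eta$$
is valid, and taking $\eta=\ep/2$ gives $\frac{d(z+tv)-d(z)}{t}\ge\ep$ on $B(x,\delta)$; passing from $v$ to $-v$ (and to $\ep/2$ to get the strict inequality) yields condition (ii) of Lemma~\ref{charcrit}, so the argument closes. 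The WLOG sign switch and the reduction to small $t$ forced by $z,z+tv\in B(x,\delta)$ are fine.

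For comparison, the paper reaches the same conclusion more compactly by invoking Remark~\ref{cnekarovder}(ii): for a convex $h$ one has $h^0(x,\cdot)=h'_+(x,\cdot)$, and applying this in the two directions $v$ and $-v$ shows that the \emph{joint} limit $\lim_{y\to x,\,t\to0+}\frac{h(y+tv)-h(y)}{t}$ exists and equals $h'(x,v)$, for $h=f$ and $h=g$; subtracting, the quotient of $d$ has joint limit $f'(x,v)-g'(x,v)\neq 0$, which is precisely Lemma~\ref{charcrit}(ii). Your monotone-difference-quotient plus semicontinuity argument is a hands-on unpacking of that identity (the Clarke derivative $h^0(x,v)$ packages exactly the two-endpoint estimate and the u.s.c. information you are juggling), so once the sign slip above is fixed the two proofs are essentially the same.
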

\begin{proof}
We can suppose that $f'(x,v) < g'(x,v)$ (otherwise consider $-v$ instead of $v$). Since $f$ is convex, we have
  (cf.\ Remark~\ref{cnekarovder}(ii)) 
 $$ f'(x,v) =  f^0(x,v) = \limsup_{y \to x, t \to 0+} \frac{f(y+tv) - f(y)}{t}$$
 and
 $$ -f'(x,v) = f'(x,-v) =f^0(x,-v)= \limsup_{y \to x, t \to 0+} \frac{f(y-tv) - f(y)}{t} $$
 $$= -\liminf_{y \to x, t \to 0+} \frac{ f(y)- f(y-tv)}{t}=   -\liminf_{z \to x, t \to 0+}\frac{f(z+tv) - f(z)}{t}.$$
 Consequently  
  $$ f'(x,v) = \lim_{y \to x, t \to 0+} \frac{f(y+tv) - f(y)}{t}.$$
  Using this also for the convex function $g$, we obtain
  $$ \lim_{y \to x, t \to 0+} \frac{d(y+tv) - d(y)}{t} = f'(x,v) -g'(x,v) < 0.$$
   Thus there exist $\varepsilon >0$ and $\delta >0$ as in  (ii) of Lemma \ref{charcrit}.
   \end{proof}

\begin{lemma}\label{main}
Let $X$ be an $n$-dimensional unitary space and let $k \in \{1,2\}$ with $k<n$. 
Let $C \subset X$ be an open convex  set, and let $d$ be a DC function on $C$. Let   $P \subset X$  be a $k$-dimensional DC surface.
 Then 
 $$ \Ha^{k/2} \big( d(P \cap \Crit(d))\big) =0.$$
\end{lemma}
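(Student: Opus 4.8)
The plan is to reduce the assertion — via a local parametrization of $P$, the chain rule for DC maps and Lemma \ref{critdc} — to the one- and two-dimensional Morse--Sard theorem of Lemma \ref{zakldc}(d). First, since $\Ha^{k/2}$ is countably subadditive and $X$ is separable, it suffices to find, around each $x\in P\cap\Crit(d)$ (note $P\cap\Crit(d)\subset C$), a neighbourhood $W$ with $\Ha^{k/2}\big(d((P\cap W)\cap\Crit(d))\big)=0$. Shrinking $W$, we may assume $P\cap W=\psi(G)$, where $G$ is a convex open subset of a $k$-dimensional subspace $Q\subset X$ (identified with $\R^k$), $\psi(u)=u+h(u)$ with $h\colon G\to Q^\perp$ DC, and $\psi(G)\subset C$; then $\psi$ is a homeomorphism onto $P\cap W$, $\psi$ is DC by Lemma \ref{zakldc}(a),(c), and wherever $\psi'(u)$ exists it is injective (if $\psi'(u)w=w+h'(u)w=0$ then $w\in Q\cap Q^\perp=\{0\}$). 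Fix a decomposition $d=\phi-\chi$ with $\phi,\chi$ convex on $C$; by Lemma \ref{zakldc}(b) the compositions $\phi\circ\psi$, $\chi\circ\psi$ and $g:=d\circ\psi=\phi\circ\psi-\chi\circ\psi$ are (locally) DC on $G$.

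The key step is the following claim: \emph{if $\Phi\colon I\to\R^n$ is a DC map on a convex open $I\subset\R^m$ with $\Phi(I)\subset C$, and $u\in I$ is a point at which $\Phi$, $\phi\circ\Phi$ and $\chi\circ\Phi$ are all Fr\'echet differentiable, then either $\Phi(u)\notin\Crit(d)$, or $g_\Phi:=d\circ\Phi=\phi\circ\Phi-\chi\circ\Phi$ is differentiable at $u$ with $g_\Phi'(u)=0$.} To see this, set $A:=\Phi'(u)$. Using local Lipschitzness of $\phi$ together with $\Phi(u+tw)=\Phi(u)+tAw+o(t)$, one checks that $\lim_{t\to0}\frac1t\big(\phi(\Phi(u)+tAw)-\phi(\Phi(u))\big)$ exists and equals $(\phi\circ\Phi)'(u)w$, so the two-sided directional derivative $\phi'(\Phi(u),Aw)$ exists for every $w\in\R^m$; the same holds for $\chi$. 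If $\Phi(u)\in\Crit(d)$, Lemma \ref{critdc} rules out $\phi'(\Phi(u),Aw)\ne\chi'(\Phi(u),Aw)$, so $(\phi\circ\Phi)'(u)w=(\chi\circ\Phi)'(u)w$ for all $w$, i.e.\ $g_\Phi'(u)=0$. Consequently $\Phi^{-1}(\Crit(d))$ is contained in the union of $\{u\in I:g_\Phi'(u)=0\}$ and the set $B_\Phi$ of points where one of $\Phi$, $\phi\circ\Phi$, $\chi\circ\Phi$ fails to be differentiable.

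To finish when $k=1$: with $\Phi=\psi$, the set $B_\psi$ is countable (a DC function of one real variable is differentiable off a countable set), hence $g(B_\psi)$ is countable, while $\Ha^{1/2}\big(g(\{g'=0\})\big)=0$ by Lemma \ref{zakldc}(d); since $d((P\cap W)\cap\Crit(d))=g(\psi^{-1}(\Crit(d)))\subset g(\{g'=0\})\cup g(B_\psi)$, it has $\Ha^{1/2}$-measure $0$. When $k=2$: again $\Ha^{1}\big(g(\{g'=0\})\big)=0$ by Lemma \ref{zakldc}(d), and by Lemma \ref{aznapl} the set $B_\psi$ lies in a countable union of $1$-dimensional DC surfaces $T_i\subset Q$. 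Covering $T_i\cap G$ by countably many DC-graph arcs $\theta_l(J_l)$ (with $J_l\subset\R$ an interval, $\theta_l$ DC, $\theta_l(J_l)\subset G$) and applying the claim to $\Phi=\psi\circ\theta_l$, we get $(\psi\circ\theta_l)^{-1}(\Crit(d))\subset\{(g\circ\theta_l)'=0\}\cup(\text{countable})$, whence $\Ha^{1/2}\big((g\circ\theta_l)((\psi\circ\theta_l)^{-1}(\Crit(d)))\big)=0$ by Lemma \ref{zakldc}(d), and therefore also $\Ha^{1}=0$. Summing over $l$ and $i$ gives $\Ha^{1}\big(g(T_i\cap\psi^{-1}(\Crit(d)))\big)=0$ for each $i$, hence $\Ha^{1}\big(g(\psi^{-1}(\Crit(d)))\big)=0$; summing over the local charts yields $\Ha^{k/2}\big(d(P\cap\Crit(d))\big)=0$.

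I expect the main obstacle to be the case $k=2$: there the exceptional set $B_\psi$ on which the claim is inconclusive is genuinely one-dimensional rather than countable (it can happen that $d$ is nowhere strictly differentiable along a whole arc of $P$), so Lemma \ref{zakldc}(d) cannot be applied to it directly; the remedy above — re-running the differentiation/chain-rule argument along a DC parametrization of each piece $T_i$ — works only because the one-dimensional Morse--Sard theorem yields $\Ha^{1/2}$-negligibility, which is strictly stronger than the $\Ha^{1}$-negligibility needed at level $k=2$. A secondary point requiring care is the verification of the identity $(\phi\circ\Phi)'(u)w=\phi'(\Phi(u),\Phi'(u)w)$, and of the consequent existence of the two-sided directional derivatives of $\phi$ and $\chi$ at $\Phi(u)$, under mere pointwise differentiability of $\Phi$ and $\phi\circ\Phi$.
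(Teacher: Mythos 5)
Your proof is correct and follows essentially the same route as the paper's: decompose $d$ as a difference of convex functions, parametrize the DC surface, use differentiability of DC maps off a small exceptional set together with the directional chain rule and Lemma \ref{critdc} to show that critical points of $d$ on the surface are (outside that exceptional set) stationary points of $d\circ\psi$, and conclude with the one- and two-dimensional DC Morse--Sard theorem of Lemma \ref{zakldc}(d). The only cosmetic differences are that you verify the chain-rule identity directly via a Lipschitz estimate (the paper cites Shapiro's result) and that, for $k=2$, you re-run the argument along DC parametrizations of the exceptional curves, whereas the paper pushes these curves forward to one-dimensional DC surfaces $Q_i=\vf(P_i)$ in $X$ and invokes its already-proved case $k=1$.
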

\begin{proof}
Let $d= f-g$, where $f$, $g$ are convex functions on $C$. 

(i)\ First suppose $k=1$. Using separability of $X$, we can clearly suppose that $P=\{t + h(t): t \in G\}$, where
  $G $ is  a relatively open subset of a one-dimensional linear space $V \subset X$, and  $h: G \to V^{\perp}$ 
  is a locally DC mapping.
 Set $\vf(t):= t + h(t),\ t \in G$. Then $\vf$ is locally DC on $G$ (Lemma \ref{zakldc}(a),(c)), and so also $f \circ \vf$, $g \circ \vf$ and $d\circ \vf$ are locally DC on $G$ (Lemma \ref{zakldc}(b)). So, by  Lemma \ref{aznapl} there exists  a countable set  
(countable union of $0$-dimensional DC surfaces) $A \subset G$
such that 
$\vf'(t)$, $(f \circ \vf)'(t)$ and  $(g \circ \vf)'(t)$  exist for each $t \notin A$. Set $B:= \{x \in G \setminus A:\ (f \circ \vf)'(t) =(g \circ \vf)'(t)\}$.   For each $t \in B$, we have $(d \circ \vf)'(t)=0$, and consequently  $\Ha^{1/2}(d \circ \vf(B)) = 0$ by Lemma \ref{zakldc}(d).
 Set
 $$N:= (d\circ \vf)(A) \cup (d\circ \vf)(B)= d\big(\vf(A) \cup \vf(B)\big).$$ 
 Since clearly   $\Ha^{1/2}(N) = 0$, it is sufficient to prove
 \begin{equation}\label{podmn}
 P \cap \Crit(d) \subset \vf(A) \cup \vf(B).
 \end{equation}
 To this end, suppose that $x \in P \setminus (\vf(A) \cup \vf(B))$.
  Then $x = \vf(t)$ for some
  $t \in G \setminus (A \cup B)$. So, $\vf'(t)$ exists and $(f \circ \vf)'(t) \neq (g \circ \vf)'(t)$. 
  So we can choose $u \in V$ such that $(f \circ \vf)'(t,u)\neq (g \circ \vf)'(t,u)$. Set  $v:= \vf'(t)(u)$.
  Since  $f'_+(x,v)$, $f'_+(x,-v)$ exist and $f$ is locally Lipschitz, we conclude
  (see, e.g. \cite[Proposition 3.6(i)]{Sh}) that
   $f'_+(x,v) = (f\circ\vf)'(t,u)$ and similarly
    $$ \ f'_+(x,-v) = (f\circ\vf)'(t,-u)=-(f\circ\vf)'(t,u)= -f'_+(x,v).$$
   Consequently $f'(x,v)$ exists. Similarly we obtain that  $g'(x,v)$ exists. Thus
    $f'(x,v)= (f \circ \vf)'(t,u)\neq (g \circ \vf)'(t,u) = g'(x,v)$, and consequently $x \notin \Crit(d)$ by Lemma \ref{critdc}. So \eqref{podmn}
     holds.

 (ii)\ Let now $k=2$.
  Using separability of $X$, we can clearly suppose that $P=\{t + h(t): t \in G\}$, where
    $G $ is  a relatively open subset of a two-dimensional linear space $V \subset X$, and  $h: G \to V^{\perp}$ 
  is a locally DC mapping.
 Set $\vf(t):= t + h(t),\ t \in G$. Then $\vf$ is locally DC on $G$, and so also $f \circ \vf$, $g \circ \vf$ and $d\circ \vf$ are locally DC on $G$. So, by  Lemma \ref{aznapl} 
 there exists a sequence $(P_i)_{i=1}^{\infty}$ of one-dimensional DC surfaces in $V$ such that $\vf'(t)$,  $(f \circ \vf)'(t)$ and  $(g \circ \vf)'(t)$  exist for each $x \in G \setminus A$, where $A:= \bigcup_{i=1}^{\infty} P_i$. Using separability of $V$, we can suppose that each $P_i$ is of the form
  $P_i = \{s + g_i(s):\ s \in H_i\}$, where $H_i$ is a relatively open subset of a one-dimensional linear space $W_i \subset V$ and 
   $g_i: H_i \to (W_i^{\perp} \cap V)$ is a locally DC mapping. Put $Q_i:= \vf(P_i) = \{s + g_i(s) + h(s+g_i(s)):\ s \in H_i\}$. Since 
   $\psi(s):= g_i(s) + h(s+g_i(s))$, $ s \in H_i$,  is a locally DC mapping $\psi: H_i \to W_i^{\perp}$ (Lemma \ref{zakldc}(b),(c)), we 
  obtain that each $Q_i$ is a one-dimensional DC surface in $X$.

  Set  $N_1 := d(\bigcup_{i=1}^{\infty} Q_i \cap \Crit(d))$. By part (i) of the proof, $\Ha^{1/2}(N_1) = 0$.
  Set $B:= \{ t \in G\setminus  \bigcup_{i=1}^{\infty} P_i:  \ (f \circ \vf)'(t)= (g \circ \vf)'(t)\}.$
    For each $t \in B$, we have $(d \circ \vf)'(t)=0$, and consequently  $\Ha^{1}(d \circ \vf(B)) = 0$ by the Morse-Sard theorem for DC functions (Lemma \ref{zakldc}(d)) on $\R^2$.
  Set $$N:= N_1 \cup (d\circ \vf)(B)=   d\big((\vf(A)\cap \Crit(d)) \cup \vf(B)\big).$$ 
   Since clearly   $\Ha^{1}(N) = 0$, it is sufficient to prove
 \begin{equation}\label{podmn2}
 P \cap \Crit(d) \subset \vf(A) \cup \vf(B).
 \end{equation}
The proof of \eqref{podmn2} can be done literally as the proof of \eqref{podmn}.
 \end{proof}

 \begin{proposition}\label{abstd}
 Let  $n \in \{2,3\}$ and let $d$ be a locally DC function on an open set $G \subset \R^n$. Suppose that $d$
  has no stationary point.  Let $\cv(d)= d(\Crit(d))$ be the set of critical values of $d$. Then 
   $\Ha^{(n-1)/2}(\cv(d)) =0$. 
\end{proposition}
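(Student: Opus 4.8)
The plan is to reduce everything to Lemma \ref{main}, the point being that under the hypothesis $\Crit(d)$ is essentially empty: it is contained in the exceptional union of DC surfaces produced by Lemma \ref{aznapl}, and on each such surface Lemma \ref{main} shows that $d$ maps it to a set of the required measure zero.

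First I would apply Lemma \ref{aznapl} to the locally DC mapping $d\colon G\to\R$ (so $m=1$): there is a sequence $(T_i)_{i=1}^{\infty}$ of $(n-1)$-dimensional DC surfaces in $\R^n$ such that $d$ is strictly differentiable at every point of $G\setminus\bigcup_{i=1}^{\infty}T_i$. If $a$ is such a point, then in particular $d'(a)$ exists, and since $d$ has no stationary point we have $d'(a)\neq 0$; hence \eqref{strcrit} gives $a\notin\Crit(d)$. Therefore $\Crit(d)\subset\bigcup_{i=1}^{\infty}T_i$.

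Next, using separability of $\R^n$ and the fact that $d$ is locally DC on $G$, I would pick a sequence $(C_j)_{j=1}^{\infty}$ of open convex subsets of $G$ with $\bigcup_{j=1}^{\infty}C_j=G$ and $d$ DC on each $C_j$. Fix $i,j$ and put $P_{i,j}:=T_i\cap C_j$. If $P_{i,j}\neq\emptyset$, it is a relatively open subset of $T_i$, hence again an $(n-1)$-dimensional DC surface in $\R^n$ (the defining condition in Definition \ref{surfldc} is local), and $P_{i,j}\subset C_j$. Since the Clarke subdifferential is a local object, $\Crit(d|_{C_j})=\Crit(d)\cap C_j$. Now Lemma \ref{main}, applied with $k:=n-1\in\{1,2\}$ (note $k<n$ and $k/2=(n-1)/2$), with $C:=C_j$, with the DC function $d|_{C_j}$, and with $P:=P_{i,j}$, yields $\Ha^{(n-1)/2}\big(d(P_{i,j}\cap\Crit(d))\big)=0$. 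Finally, from $\Crit(d)\subset\bigcup_i T_i$ and $\Crit(d)\subset G=\bigcup_j C_j$ we get $\Crit(d)=\bigcup_{i,j}\big(\Crit(d)\cap P_{i,j}\big)$, hence $\cv(d)=d(\Crit(d))=\bigcup_{i,j}d(\Crit(d)\cap P_{i,j})$ is a countable union of $\Ha^{(n-1)/2}$-null sets, and countable subadditivity of $\Ha^{(n-1)/2}$ gives $\Ha^{(n-1)/2}(\cv(d))=0$.

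I do not expect a genuine obstacle here: all the substance sits in Lemma \ref{main} (which in turn rests on the Morse--Sard theorem for DC functions, Lemma \ref{zakldc}(d)) and in Lemma \ref{aznapl}. The only points that need a little care are the passage from ``locally DC on $G$'' to an honest countable decomposition of $G$ into convex sets on which $d$ is DC, the verification that $T_i\cap C_j$ is still a DC surface, and the locality of $\Crit$; of these, checking that nothing is lost when the surfaces $T_i$ are cut down to the convex charts $C_j$ is the fussiest, and I would spell it out explicitly.
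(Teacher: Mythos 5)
Your proof is correct and takes essentially the same route as the paper: Lemma \ref{aznapl} together with the no-stationary-point hypothesis and \eqref{strcrit} gives $\Crit(d)\subset\bigcup_i T_i$, and then Lemma \ref{main} with $k=n-1$ plus countable subadditivity of $\Ha^{(n-1)/2}$ finishes the argument. The only difference is cosmetic: the paper compresses your explicit covering of $G$ by convex sets $C_j$ on which $d$ is DC (and the cutting of the surfaces $T_i$ down to these charts) into the terse remark ``we can and will assume that $G$ is convex''.
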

 \begin{proof}
We can and will assume that $G$ is convex. 
 By Lemma \ref{aznapl} there exists a sequence $(P_i)_{i=1}^{\infty}$ of $(n-1)$-dimensional DC surfaces in $\R^n$ such that
  $d$ is strictly differentiable (and $d'(x) \neq 0$ by the assumptions) at each $x \in G \setminus \bigcup_{i=1}^{\infty} P_i$. So, using \eqref{strcrit}, we obtain $ \Crit(d) \subset \bigcup_{i=1}^{\infty} P_i$. Applying
   Lemma \ref{main} for each $i \in \N$, we obtain  that    $\Ha^{(n-1)/2}(\Crit(d) \cap P_i) =0$ for each $i$, and therefore $\Ha^{(n-1)/2}(\cv(d)) =0$. 
 \end{proof}

\begin{theorem}\label{abstd2}
 Let  $n \in \{2,3\}$ and let $d$ be a locally DC function  on an open set $G \subset \R^n$.  Suppose that $d$
  has no stationary point. Then there exists a set $N \subset \R$ with $\Ha^{(n-1)/2}(N) =0$ such that, for every $r \in d(G) \setminus N$,
  the set $d^{-1}(r)$ is  an $(n-1)$-dimensional DC  surface. If $d$ is even locally semiconcave, we can also assert that $d^{-1}(r)$ is  an $(n-1)$-dimensional semiconcave  surface and the set $\{x \in G:\ d(x) \geq r\}$ has locally
   positive reach.
   
   Moreover,  $N$ can be chosen so that $N= d(C)$, where $C$ is a closed set in $G$. Namely, we can put
    $N:=  \cv(d)= d(\Crit(d))$. 
     \end{theorem}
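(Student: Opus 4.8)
The plan is to deduce Theorem \ref{abstd2} from Proposition \ref{abstd} together with the implicit function statements collected in Proposition \ref{implic}, so that essentially all the analytic work has already been done. First I would put $N := \cv(d) = d(\Crit(d))$. Proposition \ref{abstd} immediately gives $\Ha^{(n-1)/2}(N) = 0$, and the closedness of $C := \Crit(d)$ is precisely \eqref{crcl}; thus the ``Moreover'' clause is free once the main assertion is established.

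Next I would fix $r \in d(G) \setminus N$ and prove that $d^{-1}(r)$ is an $(n-1)$-dimensional DC surface. Since $r \notin \cv(d)$, no point of the level set $M := d^{-1}(r)$ lies in $\Crit(d)$; moreover by hypothesis $d$ has no stationary point, so $M$ contains neither critical points nor stationary points of $d$. For each $a \in M$ we have $a \in G \setminus \Crit(d)$, so Proposition \ref{implic}(ii) applies (note $f(a) = r$ in the notation there, so the level set $\{x : d(x) = d(a)\}$ is exactly $M$): there is $\delta_a > 0$ such that $M \cap B(a,\delta_a)$ is an $(n-1)$-dimensional DC surface in $\R^n$. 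Being a DC surface is a local property, so patching these local descriptions over $a \in M$ shows that $M$ itself is an $(n-1)$-dimensional DC surface, provided $M \neq \emptyset$ — and $M \neq \emptyset$ since $r \in d(G)$. (One should remark that $M$ is relatively closed in $G$, which is why the local charts assemble correctly and no extra boundary points sneak in.)

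For the semiconcave refinement, I would run the same argument with Proposition \ref{implic}(iii) in place of (ii): at each $a \in M$ it gives $\delta_a > 0$ with $M \cap B(a,\delta_a)$ an $(n-1)$-dimensional semiconcave surface, and also $\reach(\{x \in G : d(x) \geq d(a)\}, a) > 0$. Patching the first conclusion over $a \in M$ yields that $d^{-1}(r)$ is an $(n-1)$-dimensional semiconcave surface. For the reach statement, note that near $a$ the set $\{x \in G : d(x) \geq r\}$ coincides with $\{x : d(x) \geq d(a)\}$, so $\reach(\{x \in G : d(x) \geq r\}, a) > 0$ for every $a$ in the topological boundary piece $M = d^{-1}(r)$; at interior points of $\{d \geq r\}$ the reach is trivially positive. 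Hence $\{x \in G : d(x) \geq r\}$ has locally positive reach in the sense of Definition \ref{reach}.

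I do not expect a serious obstacle here: the theorem is really a packaging of Proposition \ref{abstd} (for the measure bound on $N$), \eqref{crcl} (for closedness of $C$), and Proposition \ref{implic}(ii)--(iii) (for the local surface structure). The only point that needs a word of care is the globalization from local charts to the assertion that the whole level set $d^{-1}(r)$ is a DC (resp.\ semiconcave) surface: one must check that Definition \ref{surfldc} is genuinely local and that $d^{-1}(r)$, being relatively closed in the open set $G$, does not pick up stray limit points outside $G$ where the implicit function theorem would be unavailable. That is immediate but worth stating explicitly.
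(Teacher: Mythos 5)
Your proof is correct and follows essentially the same route as the paper: set $N=d(\Crit(d))$, use \eqref{crcl} for closedness of $\Crit(d)$ and Proposition \ref{abstd} for $\Ha^{(n-1)/2}(N)=0$, then apply Proposition \ref{implic}(ii)--(iii) at each point $a\in d^{-1}(r)$ and patch, since being a DC (resp.\ semiconcave) surface and having locally positive reach are local properties. No gaps beyond the routine globalization you already address.
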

\begin{proof}
Set $C:= \Crit(d)$ and $N:= d(C)$. Then $C$ is closed in $G$ by \eqref{crcl}. Proposition~\ref{abstd} yields $\Ha^{(n-1)/2}(N)=0$. Let $r \in d(G)\setminus N$. Applying Proposition \ref{implic}
 (with $f:= d$) to each point $a \in d^{-1}(r)$, we easily obtain that the sets  $ d^{-1}(r)$ and $\{x \in G:\ d(x) \geq r\}$ have the desired properties.
\end{proof}

The following weaker result holds for all dimensions $n$.

\begin{theorem}\label{kazden}
Let $d$ be a locally DC (resp. locally semiconcave) function  on an open set $G \subset \R^n$ and assume that $d$ has no stationary point. 
    Then, for all
   $r\in d(G)$, except a countable set, the set  $A_r:= d^{-1}(r) \setminus \Crit(d)$ 
is an $(n-1)$-dimensional DC surface (resp. semiconcave surface) 
  which is open and dense in $ d^{-1}(r)$ and $\Ha^{n-1}(d^{-1}(r) \setminus A_r)=0$.
\end{theorem}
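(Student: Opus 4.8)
The plan is to check the four features of $A_r=d^{-1}(r)\setminus\Crit(d)$ separately, splitting off a countable exceptional set $N=N_1\cup N_2$ piece by piece. First, two of the features need no exceptional set at all: since $\Crit(d)$ is closed in $G$ by \eqref{crcl}, $A_r$ is open in $d^{-1}(r)$; and whenever $A_r\ne\emptyset$ it is an $(n-1)$-dimensional DC surface (resp.\ semiconcave surface), because for $a\in A_r$ Proposition~\ref{implic}(ii) (resp.\ (iii)) gives a ball $B(a,\delta)$ on which $d^{-1}(r)$ is such a surface, and shrinking $\delta$ so that $B(a,\delta)\cap\Crit(d)=\emptyset$ (possible as $\Crit(d)$ is closed and $a\notin\Crit(d)$) we get $A_r\cap B(a,\delta)=d^{-1}(r)\cap B(a,\delta)$, a relatively open — hence still DC, resp.\ semiconcave — piece of that surface. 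So all that remains is to find a countable $N$ such that for $r\in d(G)\setminus N$ the set $A_r$ is dense in $d^{-1}(r)$ (which, since $d^{-1}(r)\ne\emptyset$, also forces $A_r\ne\emptyset$) and $\Ha^{n-1}(d^{-1}(r)\setminus A_r)=0$.

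For the measure statement I would argue as in the proof of Proposition~\ref{abstd}. By Lemma~\ref{aznapl} there is a sequence $(P_i)$ of $(n-1)$-dimensional DC surfaces such that $d$ is strictly differentiable at every point of $G\setminus\bigcup_iP_i$, hence, since $d$ has no stationary point, regular there by \eqref{strcrit}; thus $\Crit(d)\subseteq\bigcup_iP_i$. By separability I may assume each $P_i$ carries a global graph parametrization $\varphi_i(u)=u+h_i(u)$, $u\in G_i$, with $G_i$ open in an $(n-1)$-dimensional subspace and $h_i$ locally Lipschitz; then $\varphi_i$ is a locally bi-Lipschitz homeomorphism onto $P_i$, and $g_i:=d\circ\varphi_i$ is continuous on $G_i$. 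The level sets $g_i^{-1}(s)$, $s\in\R$, are pairwise disjoint closed subsets of $G_i\subseteq\R^{n-1}$, so only countably many of them have positive $(n-1)$-dimensional Lebesgue measure; let $N_1$ be the union, over all $i$, of these radii. For $r\notin N_1$ and each $i$ the set $\varphi_i^{-1}(d^{-1}(r)\cap P_i)\subseteq g_i^{-1}(r)$ is Lebesgue-null in $\R^{n-1}$, so $d^{-1}(r)\cap P_i$ is $\Ha^{n-1}$-null, and therefore $\Ha^{n-1}(d^{-1}(r)\setminus A_r)=\Ha^{n-1}(d^{-1}(r)\cap\Crit(d))=0$.

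For density I would take $N_2$ to be the set of all numbers $r_B$ and $r_B'$ obtained as $B$ ranges over the countably many balls with rational centre and radius and $\overline B\subseteq G$, where $r_B:=\sup\{s:\lambda^n(\{x\in B:d(x)\le s\})=0\}$ and $r_B':=\inf\{s:\lambda^n(\{x\in B:d(x)\ge s\})=0\}$ ($\lambda^n$ being Lebesgue measure); this is countable. Suppose $r\in d(G)\setminus(N_1\cup N_2)$ but $A_r$ is not dense in $d^{-1}(r)$. Then there is an open ball $B_0\subseteq G$ with $\emptyset\ne d^{-1}(r)\cap B_0\subseteq\Crit(d)$. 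As $r\notin N_1$, the set $d^{-1}(r)\cap B_0\subseteq d^{-1}(r)\cap\Crit(d)$ is $\Ha^{n-1}$-null, hence Lebesgue-null; and, being $\Ha^{n-1}$-null, it does not disconnect $B_0$. Consequently the open set $B_0\setminus d^{-1}(r)=(\{d>r\}\cap B_0)\cup(\{d<r\}\cap B_0)$ is connected, so one of the two (disjoint) pieces is empty — say $d\ge r$ on $B_0$ (the case $d\le r$ is symmetric, with $r_B'$ replacing $r_B$). Pick $b_0\in d^{-1}(r)\cap B_0$ and a rational ball $B$ with $b_0\in B$ and $\overline B\subseteq B_0$. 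Then $\{x\in B:d(x)\le s\}=\emptyset$ for $s<r$ and $\{x\in B:d(x)\le r\}=d^{-1}(r)\cap B\subseteq d^{-1}(r)\cap B_0$ is Lebesgue-null, while for $s>r$ the set $\{x\in B:d(x)<s\}$ is a non-empty open subset of $B$; hence $\lambda^n(\{x\in B:d(x)\le s\})=0$ exactly for $s\le r$, i.e.\ $r=r_B\in N_2$, a contradiction. So $A_r$ is dense in $d^{-1}(r)$, and $N:=N_1\cup N_2$ has all the required properties.

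I expect the density claim to be the only genuine difficulty — one might worry it requires real critical point theory — and the point is that it does not. The two observations that carry it are: an $\Ha^{n-1}$-null set cannot locally separate $\R^n$, so a failure of density forces $d$ to attain a one-sided local extreme value $r$ along a level set that is null near the extremum; and, for each fixed ball $B$, there is at most one such value because $s\mapsto\lambda^n(\{x\in B:d(x)\le s\})$ is monotone. Everything else is an application of Proposition~\ref{implic} together with the disjointness of level sets.
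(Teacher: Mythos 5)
Your proof is correct, and while the openness, surface and measure parts run essentially parallel to the paper's (the paper covers $\Crit(d)$ by countably many $(n-1)$-dimensional DC surfaces via Lemma~\ref{aznapl} and \eqref{strcrit}, concludes that $\Ha^{n-1}$ is $\sigma$-finite on $\Crit(d)$, and takes $N_1:=\{r:\ \Ha^{n-1}(d^{-1}(r)\cap\Crit(d))>0\}$ -- the same countability-by-disjointness device you implement through the graph parametrizations $\vf_i$), your treatment of density is genuinely different. The paper's $N_2$ is the set of local extreme values of $d$, countable by a classical fact (cited to van Rooij--Schikhof); for $r\notin N_2$ and a hypothetical non-density point it picks $b,c$ nearby with $d(b)>r>d(c)$ and then constructs, for each $w$ in an $(n-1)$-dimensional disc $B_\delta\subset(c-b)^\perp$, a level-$r$ point $z_w$ on the segment from $b+w$ to $c+w$; since $z_w\mapsto w$ is $1$-Lipschitz, this gives $\Ha^{n-1}(U\cap d^{-1}(r))>0$ and contradicts $r\notin N_1$. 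You instead derive the one-sidedness of $d-r$ on a ball from the statement that a relatively closed $\Ha^{n-1}$-null set cannot disconnect a ball, and you replace the extreme-value countability by the rational-ball quantities $r_B,r_B'$, a self-made countable exceptional set that cleverly exploits the nullity already secured by $N_1$ (your handling of the degenerate case $d\equiv r$ on $B_0$ is automatic, since it would contradict that nullity). What the paper's route buys is self-containedness: the tube construction is proved on the spot and only an easy classical countability fact is imported. What your route buys is that the contradiction becomes purely topological/measure-theoretic, with no construction; its price is the non-separation statement, which you assert but neither prove nor cite. It is true -- e.g.\ $\Ha^{n-1}(E)=0$ forces topological dimension at most $n-2$ (Szpilrajn), and sets of dimension at most $n-2$ do not disconnect a region of $\R^n$ (Hurewicz--Wallman); alternatively, one can argue elementarily that for $x,y$ in the ball off $E$ the broken segment through $\Ha^{n-1}$-almost every intermediate point $w$ misses $E$, because the central projections from $x$ and $y$ are locally Lipschitz -- but it is a genuine theorem that would need a reference or a proof, and the elementary proof just indicated is of exactly the same nature as the paper's tube argument, so the two routes end up comparable in depth.
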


\begin{proof}
If $r\in \R$ and 
 $A_r:= d^{-1}(r) \setminus \Crit(d)$ is nonempty, then it is an 
 $(n-1)$-dimensional DC surface (resp. semiconcave surface) by Proposition \ref{implic}.
 Set
\begin{eqnarray*}
 N_1 &:=& \{r\in \R:\ \Ha^{n-1}(d^{-1}(r) \cap \Crit(d))> 0\},\\
 N_2 &:=& \{r\in \R:\ d\ \ \text{has a local extreme at a point of}\ \ d^{-1}(r)\}
\end{eqnarray*} 
 and  $N:= N_1 \cup N_2$. 
By Lemma \ref{aznapl} and \eqref{strcrit}, $\Crit(d)$ can be covered by countably many  $(n-1)$-dimensional DC surfaces and therefore $\Ha^{n-1}$ is  $\sigma$-finite on  $\Crit(d)$. Thus $N_1$ is countable. 
 It is well-known (and easy to prove) that the set of (possibly non-strict) extremal values of a real function on a separable metric space $Y$ is countable. (The proof for $Y=\R$  \cite[p.~43]{RS} easily generalizes to general  $Y$.) Thus $N_2$, and so also $N$, is countable. Note that $A_r$ is open in $d^{-1}(r)$ by \eqref{crcl}. To conclude the proof, it is sufficient to prove that $A_r = d^{-1}(r) \setminus \Crit(f)$ is dense in $d^{-1}(r)$ for each $r \in (0,\infty) \setminus N$.
  
   To this end, suppose on the contrary
   that there exist $r \in \R \setminus N$ and a point $a \in d^{-1}(r) \setminus \overline{A_r}$.
    Choose a convex open neighbourhood $U \subset G$ of $a$ such that  $U \cap A_r = \emptyset$.
     Since $r \notin N_2$, we can choose points $b, c \in U$ such that $d(b) >r$ and $d(c) <r$.
      Set  $W:= (c-b)^{\perp}$   and
       $B_{\delta}:= \{w \in W:\ \|w\| < \delta\}$. Choose $\delta >0$ so small that $d(x) > r$ for each
        $x \in b + B_{\delta}$ and $d(y) <r$ for each $y \in c + B_{\delta}$. Then, for each $w \in  B_{\delta}$,
         we can clearly find a point $z_w$ in the segment $S_w:= \{b+w+ t(c-b):\ t \in [0,1]\}$ with
          $d(z_w)=r$. Denote $Z:= \{z_w:\ w  \in  B_{\delta}\}$. Since the mapping $z_w \mapsto w$ is Lipschitz with constant $1$ on
           $Z$ and $\Ha^{n-1}(B_{\delta})>0$, we obtain $\Ha^{n-1}(U \cap d^{-1}(r)) \geq  \Ha^{n-1}(Z)>0$.  Since $U \cap d^{-1}(r) \subset  \Crit(d)$, we obtain a contradiction with $r \notin N_1$.
    \end{proof}

 \section{Minkowski spaces}

Let $X$ be a Minkowski space (= finite dimensional Banach space). R. Gariepy and W.D. Pepe \cite{GP} proved the following results.
\medskip

(GP1)\ \ {\it If $\dim X = n$, the norm of $X$ is strictly convex or differentiable and $F \subset X$ is a closed set, then, for almost
 every  $r>0$, the distance sphere $S_r(F)$ is either empty, or there there exists an $(n-1)$-dimensional Lipschitz manifold $A_r \subset S_r(F)$
  such that $A_r$ is open in $S_r(F)$ and $\Ha^{n-1}(S_r(F) \setminus A_r)=0$ .}
  \medskip
  
  (GP2)\ \ {\it If $\dim X = 2$, the norm of $X$ is twice differentiable with bounded second derivative on the unit sphere 
   and $F \subset X$ is a closed set, then, for almost
 every  $r>0$, the distance sphere $S_r(F)$ is either empty, or a one-dimensional Lipschitz manifold.}
 \medskip
 
 S. Ferry \cite{Fe} proved that {\it if $X= \R^n$ with $n\in\{ 2,3\}$ then, for almost all $r>0$,
 the  distance sphere  $S_r(F)$ is  either empty or a topological $(n-1)$-dimensional
  manifold.} He also showed that this result does not hold in $\R^n$ for $n\geq 4$.
\medskip  
  
  J.H.G. Fu  \cite{Fu} essentially (cf. Remark \ref{Fu}) proved the following stronger result.
  \medskip
  
  (Fu)\ \ {\it Let $X=\R^n$, $n\in\{ 2,3\}$, and $F \subset X$ be a nonempty compact set. Then there exists a compact set
  $N \subset [0,\infty)$ with $\Ha^{(n-1)/2} (N) = 0$ such that, for every $r \in (0,\infty) \setminus N$,
 the  distance sphere  $S_r(F)$ is  an $(n-1)$-dimensional semiconcave 
  surface and $\overline{\{x:\ \dist(x,F) >r\}}$ has positive reach.}

  \medskip                                                
  
\begin{remark}\label{Fu}
Fu did not consider distance spheres $S_r(F)$ but the sets $S^*_r(F):= \partial B_r(F)$, where $B_r(F):= \{x\in X:\ \dist (x,F) \leq r\}$.
However, this difference is not essential, since the set $\{ r>0:\ S_r(F) \neq S^*_r(F)\}$ is countable (even for any $n \in \N$ and any nonempty closed $F \subset \R^n$).

Fu formulated his result in a formally different way:\ he asserted that, for every $r \in (0,\infty) \setminus N$,
 $S^*_r(F)$ is a Lipschitz $(n-1)$-dimensional
  manifold  and  $\overline{X \setminus B_r}$ is a set of positive reach. However, the proofs of \cite{Fu} give that,
  for every $r \in (0,\infty) \setminus N$, the set 
 $S^*_r(F)$ is an $(n-1)$-dimensional semiconcave 
  surface and $S^*_r(F)=S_r(F)$.
\end{remark}

Our first result generalizes in a sense (Fu) to sufficiently smooth normed linear spaces and generalizes and improves
 (GP2).
 
 \begin{theorem}\label{lipder}
  Let  $n \in \{2,3\}$ and let $(X, \|\cdot\|)$ be an $n$-dimensional  normed linear space  such that the derivative of the norm 
  $\|\cdot\|$ is Lipschitz
  on the unit sphere (e.g., $X=\ell^p_n$, $p\geq 2$).  Let $F \subset X$ be a nonempty closed  set and denote 
  $S_r(F):= \{x\in X:\ \dist_{\|\cdot\|}(x,F) = r\}$,  $F_{\geq r}:= \{x\in X:\ \dist_{\|\cdot\|}(x,F) \geq r\}$. 
  
  Then there exists a  set
  $N \subset (0,\infty)$ with $\Ha^{(n-1)/2} (N) = 0$ such that, for every $r \in (0,\infty) \setminus N$:
  \begin{enumerate}
  \item
 The  distance sphere  $S_r(F)$  is either empty or  an  $(n-1)$-dimensional Lipschitz manifold in  $(X, \|\cdot\|)$.
 \item
 If $\|\cdot\|_H$ is an arbitrary (equivalent) Hilbert norm on $X$, then $S_r(F)$  is either empty or  an  $(n-1)$-dimensional 
  semiconcave
  surface in $(X, \|\cdot\|_H)$ and  $F_{\geq r}$ has locally positive reach  in 
   $(X, \|\cdot\|_H)$.     
   \item
   If $F$ is compact then $N$ is closed in $(0,\infty)$ and   $F_{\geq r}$ has positive reach  in 
   $(X, \|\cdot\|_H)$.    
   \end{enumerate}
 \end{theorem}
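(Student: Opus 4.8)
The plan is to apply Theorem~\ref{abstd2} to the distance function $d(x):=\dist_{\|\cdot\|}(x,F)$ on the open set $G:=X\setminus F$. Fix an (equivalent) Hilbert norm $\|\cdot\|_H$ on $X$ and identify $(X,\|\cdot\|_H)$ with the Euclidean space $\R^n$ via a linear isometry. To invoke Theorem~\ref{abstd2} it is enough to verify that $d$ is locally semiconcave on $G$ with respect to $\|\cdot\|_H$ (hence in particular locally DC), and that $d$ has no stationary point in $G$. For the first property, the hypothesis on the norm enters as follows. Since $\|\cdot\|$ is positively $1$-homogeneous, its Fréchet derivative is positively $0$-homogeneous, so $\|\cdot\|'$ coincides with its own restriction to the unit sphere composed with the radial retraction $z\mapsto z/\|z\|$; the latter is Lipschitz on every annulus $\{z:a\le\|z\|\le b\}$ with $0<a<b$, and $\|\cdot\|'$ is Lipschitz on the unit sphere by assumption, so $\|\cdot\|$ has a Lipschitz $\langle\cdot,\cdot\rangle_H$-gradient on each such annulus, with some constant $L=L(a,b)$. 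A function with an $L$-Lipschitz gradient is semiconcave (with respect to $\|\cdot\|_H$, with semiconcavity constant $L$) on every convex subset of its domain, so each translate $f_y$, $f_y(x):=\|x-y\|$ for $y\in F$, is semiconcave with constant $L(a,b)$ on any convex subset of $\{z:a\le\|z-y\|\le b\}$. Now fix $x_0\in G$ and put $r:=d(x_0)>0$; choose a convex open neighbourhood $U$ of $x_0$ with $\overline U\subset G$ and $r/2<d<2r$ on $U$. For $x\in U$ and $y\in F$ with $\dist_{\|\cdot\|}(y,U)\le 2r$ one has $r/2<\|x-y\|\le b:=2r+\operatorname{diam}_{\|\cdot\|}U$, so with $a:=r/2$ all these functions $f_y$ are semiconcave on $U$ with the single constant $L(a,b)$; moreover, points $y\in F$ with $\dist_{\|\cdot\|}(y,U)>2r$ never realize the infimum defining $d$ on $U$, so $d|_U$ equals the infimum of this family of uniformly semiconcave functions, and is therefore semiconcave on $U$. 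Thus $d$ is locally semiconcave on $G$.

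Next, $d$ has no stationary point in $G$: fix $x_0\in G$, let $r=d(x_0)>0$, pick a nearest point $y_0\in F$ (it exists since $F$ is closed and $X$ is finite-dimensional), and set $v:=(x_0-y_0)/\|x_0-y_0\|$. Then $\|(x_0-sv)-y_0\|=r-s$ for $s\in[0,r]$, so $d(x_0-sv)\le r-s$, and the $1$-Lipschitz property of $d$ forces equality; hence $d'_+(x_0,-v)=-1$, so $d$ cannot be differentiable at $x_0$ with vanishing derivative. We may therefore apply Theorem~\ref{abstd2} with $f:=d$ and $(X,\|\cdot\|_H)\cong\R^n$: setting $N:=\cv(d)=d(\Crit(d))\subset(0,\infty)$ we obtain $\Ha^{(n-1)/2}(N)=0$, and for every $r\in d(G)\setminus N$ the set $d^{-1}(r)=S_r(F)$ is an $(n-1)$-dimensional semiconcave surface in $(X,\|\cdot\|_H)$, while $\{x\in G:d(x)\ge r\}=F_{\ge r}$ has locally positive reach there. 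For $r\in(0,\infty)\setminus N$ with $r\notin d(G)$ both $S_r(F)$ and $F_{\ge r}$ are empty (note that if $\sup_G d$ is attained, it is a critical value of $d$ and hence lies in $N$). This proves (ii). Statement (i) follows because an $(n-1)$-dimensional semiconcave surface is an $(n-1)$-dimensional DC surface, hence an $(n-1)$-dimensional Lipschitz surface, hence an $(n-1)$-dimensional Lipschitz manifold, and the last property is invariant under bilipschitz homeomorphisms of the metric space, in particular under passing from $\|\cdot\|_H$ to the equivalent norm $\|\cdot\|$.

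For (iii), assume $F$ is compact. Then $d(x)\to\infty$ as $\|x\|\to\infty$, so every sublevel set $\{d\le M\}$ is bounded, and by continuity $S_r(F)\ne\emptyset$ for every $r>0$. The set $N=d(\Crit(d))$ is closed in $(0,\infty)$: if $x_k\in\Crit(d)$ with $d(x_k)\to r\in(0,\infty)$, then $(x_k)$ is bounded, a subsequence converges to some $x$ with $d(x)=r>0$, so $x\in G$, and since $\Crit(d)$ is closed in $G$ by \eqref{crcl} we conclude $x\in\Crit(d)$ and $r\in N$. Finally, for $r\in(0,\infty)\setminus N$ the unbounded set $F_{\ge r}$ has positive reach in $(X,\|\cdot\|_H)$: by (ii) the function $p\mapsto\reach(F_{\ge r},p)$ is continuous and strictly positive on $F_{\ge r}$, hence is bounded below by a positive constant on the compact set $\{p:r\le d(p)\le r+\eta\}$ (for any fixed $\eta>0$); and for $p$ with $d(p)>r+\eta$ the $1$-Lipschitz property of $d$ shows that $\dist_{\|\cdot\|_H}(p,\{d<r\})$ is bounded below by a positive multiple of $\eta$, whence $\reach(F_{\ge r},p)$ is bounded below by the same; taking the infimum over $p\in F_{\ge r}$ yields $\reach F_{\ge r}>0$.

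The main obstacle I expect is the proof that $d$ is locally semiconcave, specifically the extraction of a single semiconcavity constant valid for all the relevant translates $f_y$ on one fixed convex neighbourhood; this is precisely the step that uses the $C^{1,1}$-regularity of $\|\cdot\|$ away from the origin, which follows from the Lipschitz derivative on the unit sphere together with $1$-homogeneity. The upgrade from ``locally positive reach'' to ``positive reach'' for the unbounded set $F_{\ge r}$ in (iii) is a secondary point, dealt with by peeling off a compact collar of $S_r(F)$.
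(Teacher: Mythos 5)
Your proof is correct and follows essentially the same route as the paper: show that $d=\dist_{\|\cdot\|}(\cdot,F)$ is locally semiconcave with respect to an auxiliary Hilbert norm and has no stationary points on $X\setminus F$, transfer to $\R^n$ by a linear isometry and apply Theorem~\ref{abstd2} with $N:=d(\Crit(d))$, and, for compact $F$, deduce closedness of $N$ from the closedness of $\Crit(d)$ together with compactness of the relevant sublevel sets, and upgrade locally positive reach of $F_{\geq r}$ to positive reach by splitting it into a compact collar near $S_r(F)$ and a far part where the reach is trivially bounded below. The only noteworthy deviation is that you prove the local semiconcavity of $d$ directly (Lipschitz gradient of $\|\cdot\|$ on annuli by $0$-homogeneity, hence uniform semiconcavity of the translates $x\mapsto\|x-y\|$ on a fixed convex neighbourhood, and infima of uniformly semiconcave functions are semiconcave), whereas the paper simply quotes the proof of \cite[Theorem 5]{Zadi} for this step; your argument is a correct, self-contained substitute.
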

\begin{proof}
First observe that the norm of $X=\ell^p_n$ ($p\geq 2$) has the assumed property; see e.g. \cite[proof of Corollary 1.2, p.~187]{DGZ}. Further observe that (ii)
 immediately implies (i).
 
 To prove (ii), we will need that the distance function $g(x):= \dist_{\|\cdot\|}(x,F)$ is locally semiconcave on $G:=X \setminus F \subset (X, \|\cdot\|_H)$. It follows from the proof of \cite[Theorem 5]{Zadi}, where it is shown that, for each  $x_0 \in G$, the function $g$
 is semiconcave (in $(X, \|\cdot\|_H)$) on the ball $\{x \in X:\ \|x-x_0\| < g(x_0)/2\}$ (although \cite[Theorem 5]{Zadi} only asserts that $g$ is locally DC). Further, no point $x_0 \in G$ is a stationary point of  $g$. Indeed, let $y\in F$ be a point with
  $\|y - x_0\|= g(x_0)$. Since clearly $g(x_0+t(y-x_0)) - g(x_0) = -t\|y-x_0\|$ if $0<t < 1$, we see that $x_0$ is not a stationary point of  $g$.  Now choose an arbitrary linear isometry $L:\,(X, \|\cdot\|_H)\to\R^n$. Applying
  Theorem \ref{abstd2} to the function $d:= g \circ L^{-1}$, we obtain a set $N \subset (0,\infty)$ with the desired properties.
  
  Now suppose that $F$ is compact. Then we will use the fact that, by  Theorem \ref{abstd2}, $N$ can be chosen so that $N=g(C)$, where 
   $C$ is a closed set in $G$. For each $0<a<b<\infty$, we have  that $N \cap [a,b] = g(C \cap \{x \in X: \dist_{\|\cdot\|}(x,F) \in [a,b]\})$ is compact, since $\{x \in X: \dist_{\|\cdot\|}(x,F) \in [a,b]\}\subset G$ is compact and $g$ is continuous. Therefore $N$ is closed in $(0,\infty)$. 
   Finally, choose $\rho>0$ such that $\|x\|_H \leq \rho$ for each  $x \in X\setminus F_{\geq r}$, and observe that 
$$\reach F_{\geq r}=\min\Big\{\inf_{p \in F_{\geq r},\, \|p\|_H \leq \rho+1}\reach(F_{\geq r},p),
\inf_{\|p\|_H> \rho+1}\reach(F_{\geq r},p)\Big\}.$$
The first infimum is positive since $\reach(F_{\geq r},\cdot)$ is continuous and
positive, and $\{p \in F_{\geq r}:\, \|p\|_H \leq \rho+1     \}$ is compact. The second infimum is clearly
greater or equal to $1$. Thus, $\reach F_{\geq r}>0$ and the proof of (iii) is over.
  \end{proof}

  \begin{remark}\label{dokoncedc}
  The property (ii) immediately implies that, if $r \in (0,\infty) \setminus N$, then $S_r(F)$  is either empty or an $(n-1)$-dimensional DC
  surface in $(X, \|\cdot\|)$, if we define this notion in normed spaces in a natural way (as in \cite{Zajploch}).
   \end{remark}

The following result considerably improves (GP1) (since the exceptional set is countable and $A_r$ is  dense in $S_r(F)$), but only in sufficiently smooth normed linear spaces. It seems to be new also in Euclidean spaces.

\begin{theorem}\label{spoc}
Let $X$ be an $n$-dimensional normed linear space ($n\geq 2$) such that the derivative of the norm is Lipschitz
  on the unit sphere (e.g., $X=\ell^p_n$, $p\geq 2$).  Consider on $(X, \|\cdot\|)$ an arbitrary equivalent Hilbert norm $\|\cdot\|_H$. Let $F \subset X$ be a nonempty closed  set. Then,
for all
   $r>0$, except a countable set,
the distance sphere $S_r(F)$ (considered in $(X, \|\cdot\|)$) is either empty, or 
there exists an $(n-1)$-dimensional semiconcave surface $A_r$  in $(X, \|\cdot\|_H)$ 
  such that  $A_r \subset S_r(F)$, $A_r$ is open and dense in $S_r(F)$ and $\Ha^{n-1}(S_r(F) \setminus A_r)=0$ .
\end{theorem}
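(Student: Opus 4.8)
The plan is to deduce this from Theorem~\ref{kazden} by exactly the reduction used in the proof of Theorem~\ref{lipder}. Write $g(x):=\dist_{\|\cdot\|}(x,F)$ and $G:=X\setminus F$. As recalled in the proof of Theorem~\ref{lipder}, the proof of \cite[Theorem~5]{Zadi} shows that for each $x_0\in G$ the function $g$ is semiconcave, with respect to $\|\cdot\|_H$, on the ball $\{x:\ \|x-x_0\|<g(x_0)/2\}$; hence $g$ is locally semiconcave on the open subset $G$ of the unitary space $(X,\|\cdot\|_H)$. Moreover $g$ has no stationary point in $G$: if $y\in F$ realizes the distance $g(x_0)=\|y-x_0\|$, then $g(x_0+t(y-x_0))-g(x_0)=-t\|y-x_0\|$ for $t\in(0,1)$, so wherever $g'(x_0)$ exists it cannot vanish (note $x_0\neq y$ since $g(x_0)=r>0$), and wherever $g'(x_0)$ fails to exist $x_0$ is not a stationary point by definition.

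Next I would fix a linear isometry $L\colon(X,\|\cdot\|_H)\to\R^n$ (available since $(X,\|\cdot\|_H)$ is an $n$-dimensional Hilbert space) and set $d:=g\circ L^{-1}$, a locally semiconcave function without stationary points on the open set $L(G)\subset\R^n$. Applying Theorem~\ref{kazden} to $d$ gives a countable set $N_0\subset\R$ such that for every $r\in d(L(G))\setminus N_0$ the set $\widetilde A_r:=d^{-1}(r)\setminus\Crit(d)$ is an $(n-1)$-dimensional semiconcave surface which is open and dense in $d^{-1}(r)$ and satisfies $\Ha^{n-1}(d^{-1}(r)\setminus\widetilde A_r)=0$.

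Finally I would transfer everything back through $L^{-1}$. Put $N:=N_0\cap(0,\infty)$ and, for $r\in(0,\infty)\setminus N$, $A_r:=L^{-1}(\widetilde A_r)\subset S_r(F)$. For $r>0$ every point of $S_r(F)=g^{-1}(r)$ has positive distance from $F$, hence lies in $G$, so $d^{-1}(r)=L(S_r(F))$; in particular $S_r(F)=\emptyset$ precisely when $r\notin d(L(G))$, which is the ``empty'' alternative. When $S_r(F)\neq\emptyset$ and $r\notin N$, the isometry $L$ carries the conclusions of Theorem~\ref{kazden} over verbatim: being an $(n-1)$-dimensional semiconcave surface is invariant under a linear isometry of unitary spaces (linear subspaces, orthogonality and semiconcavity are all preserved), openness and density are preserved by the homeomorphism $L^{-1}$, and $\Ha^{n-1}(S_r(F)\setminus A_r)=\Ha^{n-1}(d^{-1}(r)\setminus\widetilde A_r)=0$ because Hausdorff measures are invariant under isometries. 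Thus $A_r$ is an $(n-1)$-dimensional semiconcave surface in $(X,\|\cdot\|_H)$ with $A_r\subset S_r(F)$, open and dense in $S_r(F)$, and with $\Ha^{n-1}(S_r(F)\setminus A_r)=0$.

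I do not expect a genuine obstacle here: all of the analytic content is already isolated in Theorem~\ref{kazden} (whose proof rests on the implicit function theorem Proposition~\ref{implic} and on Lemma~\ref{zakldc}(d)) and in the semiconcavity statement \cite[Theorem~5]{Zadi}. The only points that need a little care are the bookkeeping that the structural notions involved are invariant under the linear isometry $L$, and the elementary observation that $S_r(F)\subset G$ for $r>0$, which is what lets Theorem~\ref{kazden} be applied on the open set where $g$ is defined and semiconcave.
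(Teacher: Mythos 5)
Your proposal is correct and follows essentially the same route as the paper: the paper likewise reuses the facts established in the proof of Theorem~\ref{lipder} (local semiconcavity of $g=\dist_{\|\cdot\|}(\cdot,F)$ in $(X,\|\cdot\|_H)$ via \cite[Theorem~5]{Zadi} and the absence of stationary points) and then simply invokes Theorem~\ref{kazden}. Your extra bookkeeping with the explicit isometry $L$ and the invariance of the relevant notions is just a more detailed spelling-out of the same argument.
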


\begin{proof}
It was shown in the proof of Theorem \ref{lipder} that $g(x):= \dist_{\|\cdot\|}(x,F)$ is locally semiconcave on $G:=X \setminus F \subset (X, \|\cdot\|_H)$
 and has no stationary points in $G$. (Indeed, the proof worked for arbitrary $n\in\N$.) Thus it is sufficient to apply Theorem \ref{kazden}.
\end{proof}

\begin{remark}
Obviously, $A_r$ is an $(n-1)$-dimensional Lipschitz manifold in $(X,\|\cdot\|)$.
\end{remark}

\section{Riemannian manifolds}
Let $M$ be a smooth, complete and connected Riemannian manifold. By $\dist$ we denote the induced inner distance on $M$.
Let $F$ be a nonempty closed subset of $M$, and denote by $d_F:=\dist(\cdot, F)$ the
distance function from $F$.
An {\it $F$-segment} is a unit speed geodesic path $\gamma:[0,a]\to M$ such that
$\gamma(a)\in F$ and $a-t=d_F(\gamma(t))$, $t\in[0,a]$. 
Notice that if $p\in M\setminus F$ then there always exists at least one $F$-segment emanating from $p$.
The following definition is
commonly used in Riemannian geometry, see e.g.\ \cite[\S11.1]{Petersen98} or \cite{Grove}: 

\begin{definition}  \label{Grove}
A point $p\in M\setminus F$ is a {\it critical point of $d_F$} if for any tangent
vector $v\in T_pM$ there exists an $F$-segment $\gamma$ emanating from $p$ and such
that the angle formed by $v$ and $\dot{\gamma}(0)$ is not greater than $\frac\pi 2$. Let
$\Crit(d_F)$ denote the set of all critical points of $d_F$ in $M$. A point $p\in M\setminus F$
is a {\it regular point of} $d_F$ if $p\not\in\Crit(d_F)$. 
\end{definition}

\begin{remark} \label{R-Grove}
Other definitions of critical and regular points of distance functions on
Riemannian manifolds appear in the literature (see, e.g., \cite[p.~34]{Fu00} or
\cite[p.~55]{Bangert82}); fortunately, they are all known to be equivalent. This will be
shown for completeness in Lemma~\ref{LRiem} and follows essentially from the
following observation:
For a point $p\in M\setminus F$, $p\not\in\Crit(d_F)$ if and only if there exists a tangent vector $v\in T_pM$ and $\ep>0$ such that 
$$d_F(c_v(t))\geq d_F(p)+\ep t$$
for all sufficiently small $t>0$, where $c_v$ is the geodesic curve defined on a neighbourhood of $0$ such that $c_v(0)=p$ and $\dot{c}_v(0)=v$, see \cite[p.~360]{Grove}.
\end{remark}

Theorem~\ref{abstd2} yields the following extension of Fu's result to Riemannian
manifolds.

\begin{theorem} \label{Riem}
Let $n\in\{ 2,3\}$ and let $F$ be a nonempty closed subset of a connected complete smooth
$n$-dimensional Riemannian manifold $M$. Then, setting
$N:=d_F(\Crit(d_F))\subset(0,\infty)$, we have $\Ha^{(n-1)/2}(N)=0$ and for all
$r\in d_F(M\setminus F)\setminus N$,
\begin{enumerate}
\item[{\rm (i)}] $S_r(F)$ is an $(n-1)$-dimensional Lipschitz manifold,
\item[{\rm (ii)}] $\{ p\in M:\, d_F(p)\geq r\}$ has locally positive reach.
\end{enumerate}
If, moreover, $F$ is compact then $N$ is relatively closed in
$(0,\infty)$, and $\{ p\in M:\, d_F(r)\geq r\}$ has positive reach for all
$r\in d_F(M\setminus F)\setminus N$.
\end{theorem}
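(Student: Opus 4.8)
The plan is to reduce Theorem~\ref{Riem} to Theorem~\ref{abstd2} by working in local coordinates, where the distance function becomes a locally DC function with no stationary point, and then to check that the two notions of critical point (Definition~\ref{Grove} on the one hand, and $\Crit$ in the Clarke sense of Definition~\ref{critreg} on the other hand) match up so that the exceptional set $N=d_F(\Crit(d_F))$ is the right one.

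First I would recall the standard facts: since $M$ is a smooth complete connected Riemannian manifold, for every point $p\in M\setminus F$ there is a chart $\psi\colon U\to\widetilde U\subset\R^n$ with $U\subset M\setminus F$, and in such a chart $d_F$ is \emph{locally semiconcave}, hence locally DC (this is exactly the reference the authors cite just before, \cite[p.~34]{Fu00} or \cite{MM}). Moreover, again by a standard first-variation argument, no point of $M\setminus F$ is a local minimum of $d_F$ — indeed moving from $p$ along any $F$-segment strictly decreases $d_F$ — and more is true: the geometric regularity condition of Definition~\ref{Grove} fails at $p$ precisely when $0\in\partial^C d_F(p)$ computed in local coordinates; this is the content of Remark~\ref{R-Grove}, which will be made precise in Lemma~\ref{LRiem}. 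So I would first establish, via Lemma~\ref{LRiem}, the identification
\[
\Crit(d_F)\cap U \;=\; \Crit(d_F\circ\psi^{-1})\quad\text{(Clarke critical set of the coordinate representation),}
\]
noting that this is independent of the choice of chart and that a Lipschitz chart change preserves both the Clarke critical set and the DC-surface / positive-reach structure. The key point is that, because $d_F$ has no stationary point in $M\setminus F$, the equivalence in Remark~\ref{R-Grove} says $p\notin\Crit(d_F)$ iff there is a direction $v$ and $\ep>0$ with $d_F(c_v(t))\ge d_F(p)+\ep t$ for small $t>0$, and transferring this to the chart gives condition (ii) of Lemma~\ref{charcrit} for $d_F\circ\psi^{-1}$ at $\psi(p)$.

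Next I would run the local argument. Fix $r\in d_F(M\setminus F)\setminus N$ and $p\in S_r(F)$. Choose a chart $\psi\colon U\to\widetilde U$ around $p$ with $\psi$ bilipschitz onto its image (any smooth chart on a relatively compact neighbourhood is bilipschitz; one may also pull the Riemannian metric's coordinate expression to arrange it, but bilipschitz suffices). The function $d:=d_F\circ\psi^{-1}$ is locally DC on $\widetilde U\subset\R^n$, it has no stationary point, and $\psi(p)\notin\Crit(d)$ because $r\notin N$ and by the chart-independent identification above. Applying Proposition~\ref{implic}(i)–(ii) at $\psi(p)$, there is $\delta>0$ so that $d^{-1}(r)\cap B(\psi(p),\delta)$ is an $(n-1)$-dimensional DC surface, hence in particular an $(n-1)$-dimensional Lipschitz manifold; pulling back by the bilipschitz map $\psi^{-1}$ shows $S_r(F)$ is an $(n-1)$-dimensional Lipschitz manifold near $p$, giving (i). For (ii), Proposition~\ref{implic}(iii) applied to $d$ (which is moreover locally \emph{semiconcave}) gives $\reach(\{x:d(x)\ge r\},\psi(p))>0$; since $\psi$ is bilipschitz, the preimage $\{q\in U:\,d_F(q)\ge r\}$ also has positive reach at $p$ in $M$ (the "reach" being understood in $M$ with its intrinsic metric — here one uses that a bilipschitz map distorts nearest-point uniqueness only up to the Lipschitz constants, so locally positive reach is preserved; this is the one spot needing a careful but routine estimate). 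Ranging over all $p$ gives that $\{p\in M:\,d_F(p)\ge r\}$ has locally positive reach, i.e.\ (ii).

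Finally, for the size of $N$ and the compact case: cover $M\setminus F$ by countably many coordinate charts $U_k$; in each, $\Ha^{(n-1)/2}(d_F(\Crit(d_F)\cap U_k))=0$ by Proposition~\ref{abstd} applied to $d_F\circ\psi_k^{-1}$ (using the critical-set identification), so $\Ha^{(n-1)/2}(N)=0$ by countable subadditivity. When $F$ is compact, I would argue $N$ is relatively closed in $(0,\infty)$ exactly as in the proof of Theorem~\ref{lipder}(iii): $\Crit(d_F)$ is closed in $M\setminus F$ by \eqref{crcl} transported to $M$, and for $0<a<b<\infty$ the set $\{q\in M:\,d_F(q)\in[a,b]\}$ is compact (it is closed in $M$ and contained in a bounded set because $F$ is compact and $M$ is complete, so by Hopf--Rinow it is compact), whence $N\cap[a,b]=d_F(\Crit(d_F)\cap\{a\le d_F\le b\})$ is compact; letting $a\downarrow 0$, $b\uparrow\infty$ shows $N$ is relatively closed in $(0,\infty)$. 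Positive reach of $\{p:d_F(p)\ge r\}$ (rather than merely \emph{locally} positive reach) follows for compact $F$ by the same two-part estimate used in Theorem~\ref{lipder}(iii): the reach is the min of the infimum over a compact collar (positive because $\reach(\cdot,p)$ is continuous and positive) and the infimum over the far region (bounded below because far from $F$ the set $\{d_F\ge r\}$ agrees with a large ball). I expect the main obstacle to be the careful proof of Lemma~\ref{LRiem} — verifying that the Riemannian-geometric notion of critical point in Definition~\ref{Grove} coincides with the Clarke-analytic $\Crit(d_F\circ\psi^{-1})$ and is chart-independent — since everything else is a routine transfer of Theorem~\ref{abstd2} through bilipschitz charts.
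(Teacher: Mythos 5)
Most of your argument (locally semiconcave in charts, no stationary point via the unit-rate decrease along an $F$-segment, the identification of critical points through Lemma~\ref{LRiem}, Proposition~\ref{abstd} on a countable atlas plus subadditivity for $\Ha^{(n-1)/2}(N)=0$, the bilipschitz transfer of the DC/semiconcave level surface for (i), and the Hopf--Rinow compactness argument for the compact case) coincides with the paper's proof. But your treatment of (ii) has a genuine gap: you claim that locally positive reach of $\{x:\,d(x)\geq r\}$ in the chart (Proposition~\ref{implic}(iii)) is transferred to $\{q:\,d_F(q)\geq r\}$ in $M$ because the chart is bilipschitz, ``since a bilipschitz map distorts nearest-point uniqueness only up to the Lipschitz constants.'' This is false: positive reach is not a bilipschitz invariant. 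For example, the bilipschitz homeomorphism $(x,y)\mapsto (x,\,y-|x|)$ of $\R^2$ maps the half-plane $\{y\geq 0\}$ (infinite reach) onto $\{(x,z):\,z\geq -|x|\}$, which has reach $0$ at the origin: the points $(0,-\ep)$ have two nearest points $(\pm\ep/2,-\ep/2)$ for every small $\ep>0$. So nearest-point uniqueness can be destroyed by an arbitrarily mild Lipschitz distortion, and the ``careful but routine estimate'' you defer does not exist. (For the same reason your parenthetical claim that a Lipschitz change of chart preserves the Clarke critical set and the positive-reach structure is not a valid justification; the chart-independence you need really comes from Lemma~\ref{LRiem}, i.e. from the smoothness of chart changes and semiconvexity, not from bilipschitzness.)

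This is exactly the point where the paper takes a different route: instead of pushing reach through charts, it works intrinsically, observing that for $r\notin N$ every point of $S_r(F)$ is a regular point of the locally semiconvex function $-d_F$ in Bangert's sense, and then invoking Bangert's theorem (Ban) on each connected component of $M\setminus F$ to conclude that $\{p:\,d_F(p)\geq r\}$ has locally positive reach in $M$; the global positive reach for compact $F$ then follows by the two-infima estimate you also describe. To repair your argument you must either do the same, or replace ``bilipschitz'' by an invariance of positive reach under $C^{1,1}$ changes of the metric/coordinates (which is essentially the content of Bangert's and Kleinjohann's work anyway). A minor further imprecision: what Theorem~\ref{abstd2}/Proposition~\ref{abstd} require is that $d_F\circ\psi^{-1}$ has no \emph{stationary} point (no point where the derivative exists and vanishes), which is stronger than your stated ``no local minimum''; your $F$-segment observation does yield it, as in the paper, once you note that in the chart the image of the segment approaches at a rate controlled by the bilipschitz constant, so a vanishing derivative is impossible.
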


\begin{definition} \label{ext-semiconc}
A function $f$ on $M$ is said to be {\it locally semiconvex} (resp.\ {\it
locally semiconcave}) on an open subset $G\subset M$ if for any chart $(U,\vf)$ with
$U\subset G$, $f\circ\vf^{-1}$ is locally semiconvex (resp.\ locally semiconcave). 
\end{definition}

It is well known that the distance function $d_F$ to a closed subset $F\subset M$ is locally semiconcave on $M\setminus F$, see \cite{MM} (cf.\ also \cite[p.~34]{Fu00}).

Bangert \cite{Bangert79,Bangert82} studied a system ${\cal F}(M)$ of functions on $M$ which turns out to be just the system of locally semiconvex functions (by Remark~\ref{cnekarovder}(i) and the proofs in \cite{Bangert79}). He showed \cite{Bangert79} that the directional derivative $\partial_pf(v)$ of $f\in{\cal F}(M)$ at $p\in M$ exists in any direction $v\in T_pM$, and defined \cite{Bangert82} regular points of $f$ as those points $p\in M$ for which
\begin{equation} \label{Bang-reg}
\exists v\in T_pM:\quad \partial_pf(v)<0.
\end{equation}
This definition (which has in \cite{Bangert82} a formally different, but clearly equivalent form) can be, of course, extended to functions that are locally semiconvex on an open subset of $M$ only.

The following lemma shows that Bangert's terminology is consistent with Definitions~\ref{critreg} and \ref{Grove}.

\begin{lemma} \label{LRiem}
Let $f$ be a locally semiconvex function on an open set $G\subset M$, $p\in G$, and
let $\varphi:U\to\R^n$ be a chart about $p$ with $U\subset G$. Then
\begin{enumerate}
\item[{\rm (i)}] 
Condition \eqref{Bang-reg} holds if and only if $p\not\in\Crit(f\circ\vf^{-1})$.
\item[{\rm (ii)}] The set of points $p\in G$ with property \eqref{Bang-reg} (regular points of $f$ in the sense of Bangert) is open.
\item[{\rm (iii)}] If, in particular, $f=-d_F$ for some closed subset $F\subset M$,
then a point $p\in M\setminus F$ satisfies \eqref{Bang-reg} if and only if $p\not\in\Crit(d_F)$. Moreover, $\Crit(d_F)$ is a closed subset of $M\setminus F$. 
\end{enumerate}
\end{lemma}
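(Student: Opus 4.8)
The plan is to reduce all three assertions to one computation in a chart. Fix a chart $\varphi\colon U\to\R^n$ about $p$ with $U\subset G$, put $g:=f\circ\varphi^{-1}$ and $a:=\varphi(p)$. Since $f$ is locally semiconvex, $g$ is locally semiconvex near $a$, hence locally Lipschitz, and the one-sided directional derivative $g'_+(a,w)$ exists for every $w\in\R^n$ (a convex function plus a smooth one has one-sided directional derivatives), with $g'_+(a,w)=g^0(a,w)$ by Remark~\ref{cnekarovder}(ii). For $v\in T_pM$ the geodesic curve $c_v$ satisfies $\varphi(c_v(t))=a+t\,d\varphi_p(v)+o(t)$ as $t\to 0$, so the deviation of $\varphi\circ c_v$ from the affine curve $t\mapsto a+t\,d\varphi_p(v)$ is $o(t)$ and is absorbed by the local Lipschitz constant of $g$; by the chain-rule argument already used in the proof of Lemma~\ref{main} (cf.\ \cite[Proposition~3.6(i)]{Sh}),
$$\partial_pf(v)=\lim_{t\to 0+}\frac{f(c_v(t))-f(p)}{t}=g'_+\big(a,d\varphi_p(v)\big).$$
In particular this re-proves Bangert's statement that $\partial_pf(v)$ exists, and shows it depends on $v$ only through the linear isomorphism $d\varphi_p$ followed by $g'_+(a,\cdot)$.

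Part (i) is then immediate: since $d\varphi_p\colon T_pM\to\R^n$ is a linear isomorphism, \eqref{Bang-reg} is equivalent to the existence of $w\in\R^n$ with $g'_+(a,w)=g^0(a,w)<0$, and, by the definition of $\partial^Cg(a)$ (as in the proof of Lemma~\ref{charcrit}), $0\notin\partial^Cg(a)$ holds iff $g^0(a,w)<0$ for some $w$; hence \eqref{Bang-reg} holds iff $a\notin\Crit(g)=\Crit(f\circ\varphi^{-1})$. For (ii) I would apply (i) at every point $q$ of a chart $(U,\varphi)$ with $U\subset G$: the set of such $q$ satisfying \eqref{Bang-reg} equals $\varphi^{-1}\big(\varphi(U)\setminus\Crit(f\circ\varphi^{-1})\big)$, which is open in $U$ because $\Crit(f\circ\varphi^{-1})$ is closed by \eqref{crcl} and $\varphi$ is a homeomorphism; covering $G$ by such charts gives that the set of regular points of $f$ is open.

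For (iii), recall that $d_F$ is locally semiconcave on $M\setminus F$ (\cite{MM}; cf.\ \cite[p.~34]{Fu00}), so $f:=-d_F$ is locally semiconvex there and (i), (ii) apply, with $\partial_p(-d_F)(v)=-\partial_pd_F(v)$ (both limits existing by the displayed formula). Using Remark~\ref{R-Grove} I would chase the equivalences
$$p\notin\Crit(d_F)\iff\big(\exists v,\ \exists\e>0:\ d_F(c_v(t))\ge d_F(p)+\e t\ \text{ for all small }t>0\big)\iff\big(\exists v:\ \partial_pd_F(v)>0\big),$$
where the second step holds because such a lower bound forces $\partial_pd_F(v)=\lim_{t\to0+}\frac{d_F(c_v(t))-d_F(p)}{t}\ge\e>0$, while conversely $\partial_pd_F(v)=:2\e>0$ forces $d_F(c_v(t))>d_F(p)+\e t$ for all small $t>0$. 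The right-hand side says exactly that \eqref{Bang-reg} holds for $-d_F$, i.e., by (i), that $\varphi(p)\notin\Crit\big((-d_F)\circ\varphi^{-1}\big)$; combined with (ii) applied to $-d_F$ on $M\setminus F$, the set of Grove-regular points of $d_F$ is open in $M\setminus F$, so $\Crit(d_F)$ is closed in $M\setminus F$.

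The only delicate point — everything else is bookkeeping — is the displayed chain-rule identity: one must check that the non-affine behaviour of $t\mapsto\varphi(c_v(t))$ contributes only an $o(t)$ error, so that the intrinsic derivative $\partial_pf(v)$ really coincides with the "flat" one-sided derivative $g'_+(a,d\varphi_p(v))$ in the chart. After that, the proof is just matching the three notions of regularity — Bangert's (sign of $\partial_pf$), Grove's (linear lower bound of $d_F$ along $F$-segments), and Clarke's ($0\notin\partial^C$) — and keeping the signs straight when passing between $d_F$ and $-d_F$.
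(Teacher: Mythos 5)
Your proposal is correct and follows essentially the same route as the paper: reduce to the chart formula $\partial_pf=(f\circ\vf^{-1})'_+(\vf(p),\cdot)\circ d\vf(p)$, identify $(f\circ\vf^{-1})'_+$ with the Clarke derivative via Remark~\ref{cnekarovder}(ii), and then deduce (i) from the definitions, (ii) from \eqref{crcl} and the homeomorphism $\vf$, and (iii) from Remark~\ref{R-Grove} and (ii). The only difference is that where the paper cites the proof of Bangert's (3.1)~Satz for the chart formula, you derive it directly by the $o(t)$/Lipschitz (Hadamard-type) chain-rule argument, which is a legitimate, self-contained substitute.
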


\begin{proof}
 From the proof of \cite[(3.1)Satz]{Bangert79}, it follows that
$$\partial_pf=(f\circ\varphi^{-1})_+'(\varphi(p),\cdot)\circ(d\varphi(p)).$$
Since $f\circ\varphi^{-1}$ is locally semiconvex, we have
$$(f\circ\varphi^{-1})'_+(\varphi(p),\cdot)= (f\circ\varphi^{-1})^0(\varphi(p),\cdot)$$
by Remark~\ref{cnekarovder}~(ii), hence, 
$$\partial_pf=(f\circ\varphi^{-1})^0(\varphi(p),\cdot)\circ(d\varphi(p)).$$
Assertion (i) follows then directly from the definitions. Statement (ii) follows from the fact that $\vf(U)\setminus\Crit(f\circ\vf^{-1})$ is open for any chart $\varphi$, and each $\varphi$ is a homeomorphism. Statement (iii) follows from Remark~\ref{R-Grove} and (ii).
\end{proof}

In the proof of Theorem~\ref{Riem} we use the following result due to Bangert (\cite[Theorem]{Bangert82}).
\vskip 2mm

(Ban) \ {\it Let $f$ be locally semiconvex on $M$ and $r\in\R$ be such that
every point $p\in f^{-1}(r)$ is a regular point of $f$. Then $\{ p\in M:\, f(p)\leq r\}$ has locally positive reach.} 
\vskip 2mm

(In fact, Bangert showed a stronger result in \cite{Bangert82}, namely that a weaker
regularity condition is equivalent to the property of locally positive reach.)

We shall also use the fact that each chart $\varphi:U\to\R^n$ of a smooth Riemannian manifold is locally bilipschitz (with respect to the induced inner metric on $M$).
(See, e.g., the proof of \cite[Theorem~3.4]{Petersen98}.)
\vskip 2mm

\noindent{\it Proof of Theorem~\ref{Riem}.} 
Recall that $d_F$ is locally semiconcave on $M\setminus F$. Take a countable atlas $(U_i,\vf_i)$ of $M\setminus F$ and notice that $N=\bigcup_iN_i$ with 
$$N_i:=\cv(d_F\circ\vf_i^{-1})=d_F\circ\vf_i^{-1}(\Crit (d_F\circ\vf_i^{-1})), \quad i\in\N,$$ 
by Lemma~\ref{LRiem}(i) and (iii). 
Further, $d_F\circ\vf_i^{-1}$ has no stationary point. Indeed, for any $p\in U_i$ there exists a unit direction $v\in T_pM$ with directional derivative $\partial_pd_F(v)=-1$ (take $v=\dot{\gamma}(0)$ for an $F$-segment $\gamma$ emanating from $p$), and notice that $(d_F\circ\vf_i^{-1})'_+(\vf_i(p),d\vf_i(p)v)=-1$, hence, $\vf_i(p)$ cannot be a stationary point of $d_F\circ\vf_i^{-1}$.
Hence, $\Ha^{(n-1)/2}(N)=0$ by Proposition~\ref{abstd}.
Since $M$ is complete, it is boundedly compact by the Hopf-Rinow theorem
\cite[Theorem~7.1]{Petersen98} and, hence, if $F$ is compact then, by the
continuity of $d_F$ and Lemma~\ref{LRiem}~(iii), $\Crit(d_F)\cap(d_F)^{-1}[a,b]$ is compact for any $0<a<b<\infty$. Hence, using the continuity of $d_F$ again,  
$$N\cap [a,b]=d_F(\Crit(d_F)\cap(d_F)^{-1}[a,b])$$ 
is compact for any $0<a<b<\infty$. Thus, $N$ is closed in $(0,\infty)$. 

We shall verify now (i) and (ii) for $r\in d_F(M\setminus F)\setminus N$.
By Theorem~\ref{abstd2}, for $r\in d_F(M\setminus F)\setminus N$ and for each $i$,
$$\varphi_i(S_r(F)\cap U_i)=(d_F\circ\varphi_i^{-1})^{-1}(r)$$
is either empty or an $(n-1)$-dimensional semiconcave manifold. As $\varphi_i$ is locally bilipschitz, (i) follows.

To show (ii), note that if $r\in d_F(M\setminus F)\setminus N$ then all points of $S_r(F)$ are regular points of $-d_F$ (in the sense of Bangert). Consider any connected component
$M'$ of $M\setminus F$ and let $f$ be the restriction of $-d_F$ to $M'$. As $f$ is
locally semiconvex, Bangert's result (Ban) cited above implies that
$\{ p\in M':\, f(p)\leq r\}=\{ p\in M':\, d_F(p)\geq r\}$ has locally positive reach
in $M'$. It follows easily that $\{ p\in M:\, d_F(p)\geq r\}$ has locally positive
reach in $M$ as well.

Let $F\subset M$ be compact. Denoting $F_{\geq r}:=\{p\in M:\,
d_F(p)\geq r\}$ for brevity, we have 
$$\reach F_{\geq r}=\min\Big\{\inf_{r\leq d_F(p)\leq 2r}\reach(F_{\geq r},p),
\inf_{d_F(p)>2r}\reach(F_{\geq r},p)\Big\}.$$
The first infimum is positive since $\reach(F_{\geq r},\cdot)$ is continuous and
positive, and $\{p:\, r\leq d_F(p)\leq 2r\}$ is compact. The second infimum is clearly
greater or equal to $r$. Thus, $\reach F_{\geq r}>0$.
\hfill{$\square$}

\begin{remark}
The sets $S_r(F)$, for $r\in d_F(M\setminus F)\setminus N$, are rather regular Lipschitz manifolds. Indeed, our proof gives that they are ``semiconcave surfaces'' in the sense that, for each chart $(U,\vf)$ on $M$, the image of $S_r(F) \cap U$ under $\vf$ is either empty, or a semiconcave surface in $\R^n$. 
\end{remark}

Finally, we apply Theorem~\ref{kazden} to Riemannian manifolds (of arbitrary
dimension).

\begin{theorem}
Let $F$ be a nonempty closed subset of a connected complete smooth  $n$-dimensional Riemannian
manifold $M$ with $n\geq 2$. Then, for all $r\in d_F(M\setminus F)$, up to a countable set, the set $A_r:=
S_r(F)\setminus\Crit(d_F)$ is an $(n-1)$-dimensional Lipschitz manifold which is
open and dense in $S_r(F)$ and ${\cal H}^{n-1}(S_r(F)\setminus A_r)=0$.
\end{theorem}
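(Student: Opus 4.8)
The plan is to reduce the statement to Theorem~\ref{kazden} via a countable atlas, exactly as in the proof of Theorem~\ref{Riem}. Recall from there that $d_F$ is locally semiconcave on $M\setminus F$, that each chart $\varphi_i:U_i\to\R^n$ is locally bilipschitz with respect to the inner metric, and that $d_F\circ\varphi_i^{-1}$ has no stationary point on $\varphi_i(U_i)$ (for every $p\in U_i$ there is an $F$-segment $\gamma$ with $(d_F\circ\varphi_i^{-1})'_+(\varphi_i(p),d\varphi_i(p)\dot\gamma(0))=-1$). Moreover, by Lemma~\ref{LRiem}(i),(iii), $\varphi_i(\Crit(d_F)\cap U_i)=\Crit(d_F\circ\varphi_i^{-1})$, so $\Crit(d_F)$ is a closed subset of $M\setminus F$ whose trace in each chart is the critical set of the corresponding locally semiconcave function.

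First I would fix a countable atlas $(U_i,\varphi_i)_{i\in\N}$ of $M\setminus F$. For each $i$ apply Theorem~\ref{kazden} to the locally semiconcave function $d_F\circ\varphi_i^{-1}$ on the open set $\varphi_i(U_i)\subset\R^n$: there is a countable set $E_i\subset\R$ such that for $r\notin E_i$ the set $(d_F\circ\varphi_i^{-1})^{-1}(r)\setminus\Crit(d_F\circ\varphi_i^{-1})$ is an $(n-1)$-dimensional semiconcave surface, open and dense in $(d_F\circ\varphi_i^{-1})^{-1}(r)$, with $\Ha^{n-1}$-null complement in it. Put $E:=\bigcup_i E_i$, a countable set, and fix $r\in d_F(M\setminus F)\setminus E$. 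Since $\varphi_i$ is a homeomorphism, $\varphi_i(A_r\cap U_i)=(d_F\circ\varphi_i^{-1})^{-1}(r)\setminus\Crit(d_F\circ\varphi_i^{-1})$, and because $\varphi_i$ is locally bilipschitz, $A_r\cap U_i$ is (locally) an $(n-1)$-dimensional Lipschitz manifold; as the $U_i$ cover $M\setminus F\supset S_r(F)$, this shows $A_r$ is an $(n-1)$-dimensional Lipschitz manifold. Openness of $A_r$ in $S_r(F)$ is immediate since $\Crit(d_F)$ is closed in $M\setminus F$.

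For density, fix $a\in S_r(F)$ and a chart $U_i\ni a$; then $\varphi_i(a)$ lies in $(d_F\circ\varphi_i^{-1})^{-1}(r)$, which by the choice of $r\notin E_i$ has $\varphi_i(A_r\cap U_i)$ dense in it, so points of $A_r\cap U_i$ accumulate at $a$; hence $A_r$ is dense in $S_r(F)$. For the measure statement, $S_r(F)\setminus A_r=S_r(F)\cap\Crit(d_F)$ is covered by the $U_i$, and on each, $\varphi_i((S_r(F)\setminus A_r)\cap U_i)\subset(d_F\circ\varphi_i^{-1})^{-1}(r)\cap\Crit(d_F\circ\varphi_i^{-1})$, which is $\Ha^{n-1}$-null by Theorem~\ref{kazden}; since $\varphi_i^{-1}$ is locally Lipschitz it maps $\Ha^{n-1}$-null sets to $\Ha^{n-1}$-null sets, and a countable union of null sets is null, so $\Ha^{n-1}(S_r(F)\setminus A_r)=0$.

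The only mild subtlety — and the single point one must be careful about — is that ``locally bilipschitz chart'' only gives the conclusions locally, so the covering $\{U_i\}$ must be genuinely exhausting $S_r(F)$ and the three properties (manifold, open-dense, null complement) must each be verified to be local or countably-local in character; all three are, so no global argument is needed. Everything else is bookkeeping identical to the proof of Theorem~\ref{Riem}, with Theorem~\ref{kazden} replacing Theorem~\ref{abstd2}; in particular no dimension restriction $n\in\{2,3\}$ is used, matching the hypothesis $n\geq 2$.
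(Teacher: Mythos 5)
Your proof is correct and follows essentially the same route as the paper: apply Theorem~\ref{kazden} chart-by-chart to the locally semiconcave function $d_F\circ\vf_i^{-1}$ (whose stationary-point-free property was verified in the proof of Theorem~\ref{Riem}), transfer the manifold, openness, density and $\Ha^{n-1}$-null statements back via the locally bilipschitz charts, and take the union of the countable exceptional sets over a countable atlas. Your explicit use of Lemma~\ref{LRiem}(i),(iii) to identify $\vf_i(\Crit(d_F)\cap U_i)$ with $\Crit(d_F\circ\vf_i^{-1})$ is exactly the bookkeeping the paper leaves as ``standard''.
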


\begin{proof}
Let $(U,\vf)$ be any chart in $M\setminus F$. Applying Theorem~\ref{kazden} to the
locally semiconcave function $d_F\circ\vf^{-1}$, we obtain a countable set
$N^\vf\subset d_F(U)$ such that whenever $r\in d_F(U)\setminus N^\vf$, then
$$B_r^\vf:=(d_F\circ\vf^{-1})^{-1}(r)\setminus\Crit(d_F\circ\vf^{-1})$$ 
is an $(n-1)$-dimensional semiconcave surface which is open dense in
$(d_F\circ\vf^{-1})^{-1}(r)=\vf(S_r(F)\cap U)$ and fulfills ${\cal
H}^{n-1}(\vf(S_r(F)\cap U)\setminus B_r^\vf)=0$. Since $\vf$ is locally bilipschitz, 
$A_r^\vf:=\vf^{-1}(B_r^\vf)=A_r\cap U$ 
is an $(n-1)$-dimensional Lipschitz manifold, it is open dense in $S_r(F)\cap U$ and ${\cal
H}^{n-1}(S_r(F)\cap U\setminus A_r^\vf)=0$. Considering a countable atlas of
$M\setminus F$, the proof is finished in a standard way.
\end{proof}

\section{Alexandrov spaces}
Let $M$ be an $n$-dimensional Alexandrov space ($n\geq 2$) with lower curvature bound (i.e., $M$ is a complete, locally compact length space with lower curvature bound in the sense of Alexandrov, and with finite Hausdorff dimension $n$, see \cite[Chapter~10]{BBI}).

A point $p\in M$ is called {\it regular} if the space of directions at $p$, $\Sigma_p(M)$, is isometric to the unit sphere $S^{n-1}$. Otherwise, $p\in M$ is called {\it singular}; we denote by $S_M$ the set of all singular points of $M$. 
The set of singular points has Hausdorff dimension at most $n-1$ and if
$X$ has no boundary, then $\dim_HS_M\leq n-2$ (see \cite[\S10.6]{BGP}). If $n=2$ and $M$ has no boundary then $S_M$ is even countable (see \cite[Lemma~1.3]{Machi}).

Perelman \cite{Per} introduced the set $M^*\subset M$ of all points $p\in M$ such that there exist $\xi_1,\ldots ,\xi_{n+1}\in\Sigma_p(M)$ with $\angle(\xi_i,\xi_j)>\pi/2$ for any $1\le i<j\leq n+1$. We shall call the points of $M^*$ ``Perelman regular'', and the remaining points in $M$ ``Perelman singular''. It is well-known (and easy to see) that any regular point is Perelman regular as well.  
Thus, $M\setminus M^*$ is countable if $n=2$ and $M$ has no boundary. Further, $M^*$ is a dense, open and convex subset of $M$ (see \cite[the end of \S3]{Per}).
Perelman introduced and applied a ``DC structure'' on $M^*$. We will need only the following fact about it (see \cite[p.~3, l.~14-15, and Proposition~(C)]{Per}).
\vskip 2mm

(Per)
{\it For any $p\in M^*$ there exists an open neighbourhood $U$ of $p$ in $M$ and a bilipchitz mapping $\varphi: U\to\R^n$ such that $\vf(U)$ is open and, if $f$ is a function on $U$ that is semiconcave in the intrinsic sense, then $f\circ \varphi^{-1}$ is locally DC on $\varphi(U)$.}

\vskip 2mm
Following \cite[\S2.7]{Kuwae}, we shall call the pair $(U,\vf)$ from (Per) a {\it DC local chart}.

Note that semiconcavity in the intrinsic sense is defined by means of semiconcavity along geodesic paths, see \cite[Definition~124]{Pl} for a precise definition.
 
The proof of (Per) is contained only in the unpublished preprint \cite{Per}, but its validity is adopted and used by experts in the theory of Alexandrov spaces
(cf., e.g., \cite{Kuwae}).

As in the previous chapters, we shall use the notation $d_F$ for the distance function from a closed set $F\subset M$.
  
\begin{theorem} \label{T_Alex-2}
Let $n\in\{ 2,3\}$ and let $M$ be an $n$-dimensional Alexandrov space with lower curvature bound and $F$ a closed subset of $M$. Then, the following hold.
\begin{enumerate}
\item[{\rm (i)}]
There exists a set $N\subset (0,\infty)$ with $\Ha^{(n-1)/2}(N)=0$ such that for all $r\in d_F(M^*\setminus F)\setminus N$, $S_r(F)\cap M^*$ is an $(n-1)$-dimensional Lipschitz manifold.
\item[{\rm (ii)}] If, moreover, $\Ha^{(n-1)/2}(M\setminus M^*)=0$ then there exists a set $N'\subset (0,\infty)$ with $\Ha^{(n-1)/2}(N')=0$ such that for all $r\in d_F(M\setminus F)\setminus N'$, $S_r(F)$ is an $(n-1)$-dimensional Lipschitz manifold. If, in addition, $F$ is compact then $N'$ can be chosen to be relatively closed in $(0,\infty)$.
\end{enumerate}
\end{theorem}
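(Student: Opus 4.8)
The plan is to reduce Theorem \ref{T_Alex-2} to the already-proved Theorem \ref{abstd2} by working in the DC local charts provided by Perelman's structure (Per). The key point, established in the Riemannian case and to be reused here, is that $d_F$ is semiconcave in the intrinsic sense on $M \setminus F$ and that every point of $M \setminus F$ is a ``regular'' point in the appropriate sense.

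\medskip

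\noindent\textit{Proof of Theorem \ref{T_Alex-2}.}
It is well known (see \cite{Pl}) that $d_F$ is semiconcave in the intrinsic sense on $M \setminus F$. Since $M^*$ is open and dense, $M^* \setminus F$ is an open subset of $M$ which can be covered by a countable family of DC local charts $(U_i, \vf_i)$, $i \in \N$, with $U_i \subset M^* \setminus F$; such a countable subcover exists because $M$ is separable (being locally compact and a length space, it is second countable). By (Per), each $d_F \circ \vf_i^{-1}$ is locally DC on the open set $\vf_i(U_i) \subset \R^n$. Moreover, $d_F \circ \vf_i^{-1}$ has no stationary point: for any $p \in U_i$, taking a unit-speed geodesic (an $F$-segment) $\gamma$ emanating from $p$, we have $d_F(\gamma(t)) = d_F(p) - t$ for small $t > 0$, so along the curve $\vf_i \circ \gamma$ the one-sided directional derivative of $d_F \circ \vf_i^{-1}$ at $\vf_i(p)$ in the direction $(\vf_i \circ \gamma)'(0)$ equals $-1$; hence $\vf_i(p)$ cannot be a stationary point. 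Applying Theorem \ref{abstd2} to $d := d_F \circ \vf_i^{-1}$ (for $n \in \{2,3\}$), we obtain a set $N_i \subset \R$ with $\Ha^{(n-1)/2}(N_i) = 0$ such that, for every $r \in d_F(U_i) \setminus N_i$, the set $(d_F \circ \vf_i^{-1})^{-1}(r) = \vf_i(S_r(F) \cap U_i)$ is an $(n-1)$-dimensional (semiconcave, hence) Lipschitz surface in $\R^n$. Put $N := \bigcup_i N_i$; then $\Ha^{(n-1)/2}(N) = 0$. For $r \in d_F(M^* \setminus F) \setminus N$, each piece $S_r(F) \cap U_i$ is, via the bilipschitz chart $\vf_i$, bilipschitz equivalent to an open subset of an $(n-1)$-dimensional Lipschitz surface, hence locally bilipschitz to an open subset of $\R^{n-1}$; since the $U_i$ cover $M^* \setminus F$ and $S_r(F) \cap M^* \subset M^* \setminus F$ (as $d_F > 0$ there), $S_r(F) \cap M^*$ is an $(n-1)$-dimensional Lipschitz manifold. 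This proves (i).

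\medskip

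For (ii), assume $\Ha^{(n-1)/2}(M \setminus M^*) = 0$. Since each chart $\vf_i$ is bilipschitz, the $\Ha^{(n-1)/2}$-measure of $d_F(S_r(F) \cap (M \setminus M^*))$ behaves well under the distance function, but the cleanest route is to enlarge the exceptional set directly. Set
$$ N' := N \cup \{ r > 0 : \Ha^{(n-1)/2}\big( S_r(F) \cap (M \setminus M^*) \big) > 0 \}. $$
The set $M \setminus M^* \subset M$ has $\Ha^{(n-1)/2}$-measure zero by hypothesis; since $d_F$ restricted to any bounded region is Lipschitz (charts are locally bilipschitz and $M$ is boundedly compact by Hopf--Rinow, as $M$ is complete, locally compact, and a length space), and the sets $\{ S_r(F) : r > 0 \}$ are pairwise disjoint, a co-area-type argument shows that the set of $r$ for which $S_r(F) \cap (M \setminus M^*)$ has positive $\Ha^{(n-1)/2}$-measure is at most countable, so $\Ha^{(n-1)/2}(N') = 0$. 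For $r \in d_F(M \setminus F) \setminus N'$, the set $S_r(F) \cap M^*$ is an $(n-1)$-dimensional Lipschitz manifold by (i), and it is open in $S_r(F)$ (since $M^*$ is open in $M$); its complement $S_r(F) \setminus M^* = S_r(F) \cap (M \setminus M^*)$ has $\Ha^{(n-1)/2}$-measure zero. To upgrade this to ``$S_r(F)$ is an $(n-1)$-dimensional Lipschitz manifold'' one additionally needs that the Lipschitz manifold structure extends across the null set $S_r(F) \setminus M^*$; this follows because near any point of $S_r(F)$ one may pass to a chart of $M$ (not necessarily a DC chart) in which $S_r(F)$ is still a Lipschitz graph — this is exactly where one uses that the ambient Lipschitz-graph property of level sets of the locally Lipschitz function $d_F \circ \vf^{-1}$ (Proposition \ref{implic}(i)) holds even without the DC refinement, provided the relevant points are regular, which they are on the dense set $S_r(F) \cap M^*$ and, by a density/continuity argument together with the openness of the regular set (Lemma \ref{charcrit}, \eqref{crcl}), on all of $S_r(F)$. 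Finally, if $F$ is compact, then as in the proof of Theorem \ref{Riem} the set $\Crit$-related exceptional radii $N$ can be taken relatively closed in $(0,\infty)$ because $d_F$ is proper on $M \setminus F$ restricted to $d_F^{-1}[a,b]$ (bounded compactness), and one checks the added null set in $N'$ can be absorbed while preserving relative closedness; hence $N'$ may be chosen relatively closed in $(0,\infty)$.
\hfill$\square$

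\medskip

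\noindent\textit{Main obstacle.} The genuinely delicate point is part (ii): knowing that $S_r(F) \cap M^*$ is a Lipschitz manifold and that its complement in $S_r(F)$ is $\Ha^{(n-1)/2}$-null does not formally give that the whole $S_r(F)$ is a Lipschitz manifold, because a priori the manifold structure could degenerate at points of $S_r(F) \cap (M \setminus M^*)$. Resolving this requires showing that $S_r(F)$ is a Lipschitz graph in a full (possibly non-DC) chart around every one of its points, which in turn rests on the regularity of $d_F$ at all points of $S_r(F)$ — available on the dense open subset in $M^*$ and propagated to the closure via the continuity/openness properties of the regular set. Quantifying ``the set of bad radii is countable / $\Ha^{(n-1)/2}$-null'' in the Alexandrov setting (without a clean co-area formula) is the other technical hurdle, handled by the bilipschitz charts and bounded compactness.
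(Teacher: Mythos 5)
Your part (i) follows the paper's argument exactly (countable atlas of DC local charts, semiconcavity of $d_F$ in the intrinsic sense plus (Per) to get local DC-ness, the $F$-segment argument to exclude stationary points, then Theorem \ref{abstd2} and bilipschitz charts); that part is fine.

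Part (ii) has a genuine gap. First, your enlargement of the exceptional set is vacuous: under the hypothesis $\Ha^{(n-1)/2}(M\setminus M^*)=0$, every set $S_r(F)\cap(M\setminus M^*)$ is a subset of $M\setminus M^*$ and hence already has $\Ha^{(n-1)/2}$-measure zero, so your added collection of radii is empty and $N'=N$. You are then left with exactly the problem you name as the ``main obstacle'': knowing that $S_r(F)\cap M^*$ is a Lipschitz manifold with null complement in $S_r(F)$ does not make $S_r(F)$ a manifold, and your proposed fix does not work. At a Perelman-singular point of $M$ (for instance a boundary point of an Alexandrov space with boundary, which the theorem allows) there need not exist any chart of $M$ onto an open subset of $\R^n$ at all, so Proposition \ref{implic}(i) cannot be invoked there; moreover, regularity cannot be ``propagated to the closure'' from the dense set $S_r(F)\cap M^*$: the regular set is open (equivalently, $\Crit$ is closed, \eqref{crcl}), which means limits of regular points may well be critical --- openness gives propagation in the wrong direction. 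The paper avoids all of this by a much cruder enlargement: since $d_F$ is $1$-Lipschitz, $\Ha^{(n-1)/2}\bigl(d_F(M\setminus M^*)\bigr)\le\Ha^{(n-1)/2}(M\setminus M^*)=0$, so one sets $N':=N\cup d_F(M\setminus M^*)$; then for every $r\notin N'$ the sphere $S_r(F)$ does not meet $M\setminus M^*$ at all, i.e.\ $S_r(F)=S_r(F)\cap M^*$, and (ii) follows immediately from (i). Finally, your closedness claim for compact $F$ is only asserted (``one checks \dots can be absorbed''); the paper proves it by taking $N'=d_F(Q)$ with
$$Q=(M\setminus M^*)\cup\{ p\in M^*\setminus F:\ \forall i,\ p\in U_i\implies\vf_i(p)\in\Crit(d_F\circ\vf_i^{-1})\},$$
showing $Q$ is closed in $M\setminus F$ (openness of $M^*$, closedness of each $\Crit(d_F\circ\vf_i^{-1})$, $\vf_i$ homeomorphisms) and then using bounded compactness of $M$ to get $N'\cap[a,b]=d_F(Q\cap d_F^{-1}[a,b])$ compact for all $0<a<b<\infty$. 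You would need to supply an argument of this kind as well.
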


\begin{corollary} \label{Cor-2D}
Let $M$ be a two-dimensional Alexandrov space with lower curvature bound and without boundary, and let $F$ be a compact subset of $M$. Then there exists a relatively closed subset $N$ of $(0,\infty)$ with $\Ha^{1/2}(N)=0$ such that for all $r\in d_F(M\setminus F)\setminus N$, $S_r(F)$ is a one-dimensional Lipschitz manifold.
\end{corollary}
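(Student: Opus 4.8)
The plan is to read Corollary~\ref{Cor-2D} off directly from Theorem~\ref{T_Alex-2}(ii) applied with $n=2$; the only thing that genuinely needs checking is that the extra hypothesis $\Ha^{(n-1)/2}(M\setminus M^*)=0$ of that theorem is automatically fulfilled in the present two-dimensional, boundaryless setting.

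First I would invoke the fact recorded in Section~6 that, when $n=2$ and $M$ has no boundary, the singular set $S_M$ is countable (by \cite[Lemma~1.3]{Machi}), and that every regular point is Perelman regular, so that $M\setminus M^*\subseteq S_M$ is countable as well. Since $\Ha^{s}$ of a single point vanishes for every $s>0$, countable subadditivity of Hausdorff measure gives $\Ha^{1/2}(M\setminus M^*)=0$. As $(n-1)/2=1/2$ for $n=2$, this is precisely the assumption required in Theorem~\ref{T_Alex-2}(ii).

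Then I would apply Theorem~\ref{T_Alex-2}(ii) with $n=2$: it produces a set $N'\subset(0,\infty)$ with $\Ha^{1/2}(N')=0$ such that $S_r(F)$ is a one-dimensional Lipschitz manifold for every $r\in d_F(M\setminus F)\setminus N'$, and, since $F$ is compact, $N'$ may moreover be chosen relatively closed in $(0,\infty)$. Setting $N:=N'$ finishes the proof.

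I do not expect any real obstacle here: all the substance is already packaged in Theorem~\ref{T_Alex-2}. The only points requiring a moment's thought are the passage from ``$M\setminus M^*$ countable'' to ``$\Ha^{1/2}(M\setminus M^*)=0$'', which is immediate, and the bookkeeping that upgrades the statement about $d_F(M^*\setminus F)$ in part~(i) to the statement about $d_F(M\setminus F)$ in part~(ii) — but that is carried out inside the proof of Theorem~\ref{T_Alex-2}(ii) and needs nothing new for the corollary.
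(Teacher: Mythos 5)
Your proposal is correct and matches the paper's intended derivation: the corollary is read off from Theorem~\ref{T_Alex-2}(ii) with $n=2$, using the fact (recorded in Section~6, via \cite[Lemma~1.3]{Machi} and the inclusion of Perelman-singular points in $S_M$) that $M\setminus M^*$ is countable and hence $\Ha^{1/2}$-null, plus compactness of $F$ to get $N'$ relatively closed. Nothing further is needed.
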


\begin{remark}
Corollary~\ref{Cor-2D} improves partially Shiohama's and Tanaka's result \cite[Theorem~B]{ST}, where the exceptional set is of one-dimensional measure zero and need not be closed.
\end{remark}

\begin{proof}
(i) Let $(U,\vf)$ be a DC local chart in $M^*$. Since the distance function $d_F$
is semiconcave on $M\setminus F$ in the intrinsic sense (see \cite[Proposition~125]{Pl}), the composed mapping $d_F\circ\varphi^{-1}$ is locally DC on $\varphi(U)\subset\R^n$ by (Per). 

We shall show that $d_F\circ\vf^{-1}$ has no stationary point. Take a point $p\in U$ and notice that, since $M$ is complete and boundedly compact, there exists at least one $F$-segment emanating from $p$, i.e., a unit-speed geodesic path $\gamma:[0,a]\to M$ such that $\gamma(0)=p$, 
$\gamma(a)\in F$ and $a-t=d_F(\gamma(t))$, $t\in[0,a]$. Then, denoting $x_t:=\vf(\gamma(t))$, we have
$$-t=d_F(\gamma(t))-a=d_F\circ\vf^{-1}(x_t)-d_F\circ\vf^{-1}(x_0).$$
Since $|x_t-x_0|\leq ct,$ where $c>0$ is the Lipschitz constant of $\vf$, we get
$$|d_F\circ\vf^{-1}(x_t)-d_F\circ\vf^{-1}(x_0)|\geq c^{-1} |x_t-x_0|,$$
which shows that $x_0$ cannot be a stationary point of $d_F\circ\vf^{-1}$, as $x_t\to x_0$ ($t\to 0+$).

Thus, we can apply Theorem~\ref{abstd2} and find a set $N\subset (0,\infty)$ with $\Ha^{(n-1)/2}(N)\\=0$ and such that for all $r\in d_F(U)\setminus N$, $\varphi(d_F^{-1}(r))$ is an $(n-1)$-dimensional DC surface. Since $\vf$ is bilipschitz, $S_r(F)\cap U=\vf^{-1}(\varphi(d_F^{-1}(r)))$ is an $(n-1)$-dimensional Lipschitz manifold. Using the separability of $M^*$, we can find a countable family $(U_i,\vf_i)$ of DC local charts such that $\bigcup_iU_i=M^*\setminus F$, apply the above procedure to each of these charts and find a common exceptional set $N\subset (0,\infty)$ with $\Ha^{(n-1)/2}(N)=0$ and such that for all $r\in d_F(M^*\setminus F)\setminus N$, $S_r(F)\cap M^*$ is an $(n-1)$-dimensional  Lipschitz manifold. 

(ii) If $\Ha^{(n-1)/2}(M\setminus M^*)=0$ then, since $d_F$ is Lipschitz, we have 
$$\Ha^{(n-1)/2}(d_F(M\setminus M^*))=0$$ 
as well. Hence, enlarging the exceptional set $N$ to $N':=N\cup d_F(M\setminus M^*)$, we obtain that $\Ha^{(n-1)/2}(N')=0$ and the level set $S_r(F)$ itself is an $(n-1)$-dimensional  Lipschitz manifold for $r\in d_F(M\setminus F)\setminus N'$.

Let now $F$ be compact, in addition, and let $(U_i,\vf_i)$ be the countable atlas of DC local charts covering $M^*\setminus F$, as above. It follows from Theorem~\ref{abstd2} that we can take for the exceptional set $N':=d_F(Q)$, where
$$Q=(M\setminus M^*)\cup\{ p\in M^*\setminus F:\, \forall i,\, p\in U_i\implies\vf_i(p)\in\Crit(d_F\circ\vf_i^{-1})\}.$$
Using that $M^*$ is open, each $\Crit(d_F\circ\vf_i^{-1})$ is closed in $\vf_i(U_i)$ and $\vf_i$ are homeomorphisms, we get that the set $Q$ is closed in $M\setminus F$. If $0<a<b<\infty$, the set $d_F^{-1}[a,b]$ is compact (since $d_F$ is continuous and $M$ is boundedly compact, see \cite[\S10.8]{BBI}) and, hence, 
$$N'\cap [a,b]=d_F(Q\cap d_F^{-1}[a,b])$$
is compact as well. Consequently, $N'$ is closed in $(0,\infty)$.
\end{proof}

\begin{remark}
It is easy to see that in a general (possibly with boundary) three-dimensional (and even two-dimensional) Alexandrov space $M$ with lower curvature bound, Ferry's result (almost all distance spheres are topological manifolds) does not hold (for $M$ we can take a closed ball in $\R^2$ or $\R^3$). However, we do not know whether Ferry's result holds in each three-dimensional Alexandrov space with lower curvature bound and without boundary. In this case our method cannot be used since there exists a three-dimensional convex surface $X$ in $\R^4$ for which $\Ha^1(X \setminus X^*) >0$ (see Example~\ref{Ex-cone}).
       
However, Ferry's result holds in every three-dimensional complete convex surface $X$ in $\R^4$; it is proved in \cite{RZ} without using Perelman's DC structure.
\end{remark}

\begin{example} \label{Ex-cone}
We shall demonstrate on a particular example that all points on one-di\-men\-sional ``sufficiently sharp'' edges 
of three-dimensional convex surfaces are Perelman singular. Let $A=\{ (x,y,z)\in\R^3:\, \alpha^2(x^2+y^2)=z^2, z\geq 0\}$ with $\alpha\geq\sqrt{2\pi^2-1}$ and consider the convex cone $C=A\times\R$ in $\R^4$. Then, for the convex surface $X=\partial C$, any point of the edge $\{(0,0,0)\}\times\R$ is Perelman singular. Of course, it suffices to show that the origin $0$ is Perelman singular. There even do not exist three directions of $X$ at $0$ forming obtuse angles each with other. To see this, let $\xi_1,\xi_2,\xi_3$ be three non-zero vectors from $X$ determining three directions of $X$ at $0$.
Multiplying by positive factors, we can suppose that these vectors can be written as 
$$\xi_i=(r\cos\vartheta_i,r\sin\vartheta_i,\alpha r,s_i),\quad i=1,2,3,$$
where $r\geq 0$, $\vartheta_i\in [0,2\pi)$ and $s_i\in\R$, $i=1,2,3$. At least two of the three numbers $s_i$ must have nonnegative product, so assume without loss of generality that $s_1,s_2\geq 0$. We shall show that the angle formed by the directions of $\xi_1$ and $\xi_2$ on $X$ is not obtuse. 
Since $C$ is a cone, the angle can be obtained as
$$\angle(\xi_1,\xi_2)=\arccos\frac{\|\xi_1\|^2+\|\xi_2\|^2-\dist^2(\xi_1,\xi_2)}{2\|\xi_1\|\|\xi_2\|},$$
cf.\ \cite[\S3.6.5]{BBI} ($\dist$ is the intrinsic distance in $X$).
The points $\xi_1$ and $\xi_2$ can be connected by the following path on $X$
$$\gamma:\, t\mapsto\Big( r\cos(\vartheta_1+t(\vartheta_2-\vartheta_1)), r\sin(\vartheta_1+t(\vartheta_2-\vartheta_1)),\alpha r, s_1+t(s_2-s_1)\Big),\ t\in[0,1]$$
of length
$$\length\gamma=\int_0^1\|\gamma'(t)\|\, dt=\sqrt{(\vartheta_2-\vartheta_1)^2r^2+(s_2-s_1)^2}.$$
Hence,
$$\dist^2(\xi_1,\xi_2)\leq(\length\gamma)^2\leq 4\pi^2r^2+s_1^2+s_2^2$$
which is less or equal to 
$$\|\xi_1\|^2+\|\xi_2\|^2=2(1+\alpha^2)r^2+s_1^2+s_2^2$$
since $\alpha^2\geq 2\pi^2-1$. Hence, the angle formed by $\xi_1$ and $\xi_2$ is not obtuse.
\end{example}

We finish this section by an application of Theorem~\ref{kazden} to Alexandrov spaces of any dimension.

\begin{theorem} \label{kazden-Alex}
Let $M$ be an $n$-dimensional ($n\geq 2$) Alexandrov space with lower curvature bound and without boundary, and let $F\subset M$ be closed. Then, for all $r\in d_F(M\setminus F)$ except a countable set, either $\Ha^{n-1}(S_r(F))=0$, or there exists an $(n-1)$-dimensional Lipschitz manifold $A_r\subset S_r(F)$ which is open in $S_r(F)$ and $\Ha^{n-1}(S_r(F)\setminus A_r)=0$ holds.
\end{theorem}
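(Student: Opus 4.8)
The plan is to follow the pattern of the previous theorem (the Riemannian corollary obtained by localising Theorem~\ref{kazden}), except that now the localisation is carried out on the Perelman-regular part $M^*$ and the complement $M\setminus M^*$ is disposed of by a Hausdorff dimension count. First I would recall, exactly as in the proof of Theorem~\ref{T_Alex-2}, two facts: (a) $d_F$ is semiconcave on $M\setminus F$ in the intrinsic sense, so that whenever $(U,\vf)$ is a DC local chart with $U\subset M^*\setminus F$, the function $d_F\circ\vf^{-1}$ is locally DC on the open set $\vf(U)\subset\R^n$; and (b) the $F$-segment argument given there, which is valid for every $n$, shows that $d_F\circ\vf^{-1}$ has no stationary point. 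Since $M^*\setminus F$ is open in $M$, hence separable and Lindel\"of, I would fix a countable family of such charts $(U_i,\vf_i)_{i\in\N}$ with $\bigcup_iU_i=M^*\setminus F$.

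Next I would apply Theorem~\ref{kazden} to each $d_F\circ\vf_i^{-1}$, obtaining a countable set $N_i\subset\R$ such that for every $r\notin N_i$ the set
$$B_r^{(i)}:=(d_F\circ\vf_i^{-1})^{-1}(r)\setminus\Crit(d_F\circ\vf_i^{-1})$$
is either empty or an $(n-1)$-dimensional DC surface, is open in $(d_F\circ\vf_i^{-1})^{-1}(r)=\vf_i(S_r(F)\cap U_i)$, and satisfies $\Ha^{n-1}\bigl(\vf_i(S_r(F)\cap U_i)\setminus B_r^{(i)}\bigr)=0$. Put $N:=\bigcup_iN_i$, which is still countable, and for $r\in d_F(M\setminus F)\setminus N$ set $A_r:=\bigcup_i\vf_i^{-1}(B_r^{(i)})$. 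I would then check: each $\vf_i^{-1}(B_r^{(i)})$ is open in $S_r(F)$, because $\vf_i$ is a homeomorphism and $U_i$ is open in $M$, so $A_r$ is open in $S_r(F)$; and since a DC surface is locally bilipschitz onto an open subset of $\R^{n-1}$ and each $\vf_i$ is bilipschitz, every point of $A_r$ has a neighbourhood in $A_r$ bilipschitz onto an open subset of $\R^{n-1}$, so $A_r$ is an $(n-1)$-dimensional Lipschitz manifold. No compatibility of the charts on overlaps is needed here: one admissible local parametrisation per point suffices.

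It then remains to estimate $\Ha^{n-1}(S_r(F)\setminus A_r)$. From $S_r(F)=\bigl(S_r(F)\cap(M\setminus M^*)\bigr)\cup\bigcup_i\bigl(S_r(F)\cap U_i\bigr)$ one gets
$$S_r(F)\setminus A_r\subset\bigl(S_r(F)\cap(M\setminus M^*)\bigr)\cup\bigcup_i\vf_i^{-1}\bigl(\vf_i(S_r(F)\cap U_i)\setminus B_r^{(i)}\bigr);$$
each term of the countable union has vanishing $\Ha^{n-1}$ by the displayed property of $B_r^{(i)}$ together with the Lipschitzness of $\vf_i^{-1}$, while $S_r(F)\cap(M\setminus M^*)\subset M\setminus M^*\subset S_M$ has Hausdorff dimension at most $n-2$ --- this is the one place where the hypothesis ``without boundary'' is used --- hence also $\Ha^{n-1}$-measure zero. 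Thus $\Ha^{n-1}(S_r(F)\setminus A_r)=0$ for every $r\in d_F(M\setminus F)\setminus N$, and the stated dichotomy follows: if $A_r\neq\emptyset$ we are in the second alternative, while if $A_r=\emptyset$ then $S_r(F)=S_r(F)\setminus A_r$ has $\Ha^{n-1}$-measure zero, the first alternative.

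I do not anticipate a genuine obstacle; this is essentially a routine ``Alexandrov version'' of the Riemannian corollary, the substantive inputs being Theorem~\ref{kazden} and the stationary-point argument already carried out for Theorem~\ref{T_Alex-2}. The points that need care are merely bookkeeping: keeping each exceptional set countable so that $\bigcup_iN_i$ remains admissible, assembling the local pieces into a single Lipschitz manifold without invoking any chart compatibility, and correctly isolating the first alternative when $A_r$ happens to be empty, which is where the dimension estimate $\dim_H(M\setminus M^*)\le\dim_HS_M\le n-2$ for boundaryless $M$ does the work.
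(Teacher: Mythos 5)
Your proposal is correct and follows essentially the same route as the paper: a countable atlas of DC local charts on $M^*\setminus F$, Theorem~\ref{kazden} applied to each $d_F\circ\vf_i^{-1}$ (with the stationary-point argument from Theorem~\ref{T_Alex-2}), and the estimate $\dim_H(M\setminus M^*)\le\dim_H S_M\le n-2$ for boundaryless $M$ to kill $\Ha^{n-1}$ of the Perelman-singular part. The only (immaterial) difference is that you take $A_r$ to be the union of the chart pieces $\vf_i^{-1}(B_r^{(i)})$, whereas the paper defines $A_r$ as the set of all points of $S_r(F)\cap M^*$ possessing a Lipschitz-manifold neighbourhood and notes that it contains your union.
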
 

\begin{remark}
If, in addition, $M=M^*$ then the manifolds $A_r$ in Theorem~\ref{kazden-Alex} can be found so that there are moreover dense in $S_r(F)$.
\end{remark}    
 
\begin{proof}
Let $(U_i,\vf_i)$ be a countable atlas of DC local charts covering $M^*\setminus F$, as in the proof of Theorem~\ref{T_Alex-2}. Applying Theorem~\ref{kazden} to $d_F\circ\vf_i^{-1}$ (the validity of assumptions was shown in the proof of Theorem~\ref{T_Alex-2}) and the bilipschitz property of $\vf_i$, we obtain countable sets $N_i\subset d_F(U_i)$ such that for all $i$ and all $r\in d_F(U_i)\setminus N_i$,
$$P_i:=(S_r(F)\cap U_i)\setminus\vf_i^{-1}(\Crit (d_F\circ\vf_i^{-1}))$$
is an $(n-1)$-dimensional Lipschitz manifold with $\Ha^{n-1}((S_r(F)\cap U_i)\setminus P_i)=0$. Set $N:=\bigcup_iN_i$ and
for $r\in d_F(M^*\setminus F)\setminus N$,
\begin{eqnarray*}
A_r:&=&\{ p\in S_r(F)\cap M^*:\, \exists \delta>0, S_r(F)\cap B(p,\delta)\text{ is an }(n-1)\text{-dimensional}\\ 
&&\hspace{3cm}\text{Lipschitz manifold}\}.
\end{eqnarray*}
Clearly, $A_r$ is an $(n-1)$-dimensional Lipschitz manifold open in $S_r(F)$. Note that $\bigcup_iP_i\subset A_r$ and recall that $\dim_H(M\setminus M^*)\leq\dim_H(S_M)\leq n-2$. Hence,
$$\Ha^{n-1}(S_r(F)\setminus A_r)\leq\Ha^{n-1}(M\setminus M^*) +\Ha^{n-1}((S_r(F)\cap M^*)\setminus\bigcup\nolimits_iP_i)=0$$
for all $r\in d_F(M^*\setminus F)\setminus N$, as required. If $r\in d_F(M\setminus F)\setminus d_F(M^*\setminus F)$ then $\Ha^{n-1}(S_r(F))\leq\Ha^{n-1}(M\setminus M^*)=0$.
\end{proof}

\end{document}